\newcommand{\ie}{\textit{i.e.}\;}
\newcommand{\eg}{\textit{e.g.}\;}
\newcommand{\cf}{\textit{cf.}\;}
\newcommand{\bbC}{\mathbb{C}}
\newcommand{\bbN}{\mathbb{N}}
\newcommand{\bbR}{\mathbb{R}}
\newcommand{\bbZ}{\mathbb{Z}}
\newcommand{\mrm}[1]{\mathrm{#1}}		
\newcommand{\ol}[1]{\overline{#1}}		
\newcommand{\co}{\colon}				
\newcommand{\vrt}{\,\vert\,}			
\newcommand{\lto}{\rightarrow}			
\newcommand{\what}[1]{\widehat{#1}}		
\newcommand{\wtilde}[1]{\widetilde{#1}}	
\newcommand{\setm}{\smallsetminus}		
\newcommand{\mcap}{\cap}				
\newcommand{\mcup}{\cup}				
\newcommand{\op}{\oplus}				
\newcommand{\dsum}{\dotplus}			
\newcommand{\hsum}{\,\what{+}\,}		
\newcommand{\Amin}{A_{\min}}			%
\newcommand{\Amax}{A_{\max}}			%
\newcommand{\Bmin}{B_{\min}}			%
\newcommand{\Bmax}{B_{\max}}			%
\newcommand{\Lmin}{L_{\min}}			%
\newcommand{\Lmax}{L_{\max}}			%
\DeclareMathOperator{\dom}{dom}		
\DeclareMathOperator{\ran}{ran}		
\DeclareMathOperator{\res}{res}		
\DeclareMathOperator{\spn}{span}	
\DeclareMathOperator{\mul}{mul}		
\DeclareMathOperator{\reg}{reg}		
\newcommand{\cG}{\mathcal{G}}
\newcommand{\cH}{\mathcal{H}}
\newcommand{\cS}{\mathcal{S}}
\newcommand{\cX}{\mathcal{X}}
\newcommand{\fH}{\mathfrak{H}}
\newcommand{\fK}{\mathfrak{K}}
\newcommand{\fM}{\mathfrak{M}}
\newcommand{\fN}{\mathfrak{N}}
\newcommand{\wtB}{\wtilde{B}}
\newcommand{\vp}{\varphi}
\newcommand{\wtcG}{\wtilde{\cG}}
\newcommand{\whf}{\what{f}}
\newcommand{\wtk}{\wtilde{k}}
\newtheorem{thm}{Theorem}
\newtheorem{lem}[thm]{Lemma}
\newtheorem{prop}[thm]{Proposition}
\theoremstyle{definition}
\numberwithin{equation}{section}
\numberwithin{thm}{section}
\begin{document}

\title[A Hilbert space model led by the A-model]{The A-model
with mutually equal model parameters can lead to a Hilbert space
model}
\author{Rytis Jur\v{s}\.{e}nas}
\address{Vilnius University,
Institute of Theoretical Physics and Astronomy,
Saul\.{e}tekio ave.~3, LT-10257 Vilnius, Lithuania}
\email{Rytis.Jursenas@tfai.vu.lt}
\keywords{Finite rank higher order singular perturbation,
cascade (A) model, Hilbert space, scale of Hilbert spaces,
Pontryagin space, ordinary boundary triple, Krein $Q$-function,
Weyl function, gamma field, symmetric linear relation,
proper extension, resolvent}
\subjclass[2010]{47A56, 
47B25, 
47B50, 
35P05. 
}
\date{\today}
\begin{abstract}
It is known that the A-model for higher order singular
perturbations can be considered as a Hilbert space model
if the model parameters are mutually distinct, and that it
is necessarily a Pontryagin space model if otherwise.
In this note we demonstrate that the A-model with mutually
equal model parameters can nonetheless lead to
a Hilbert space model if the extensions
in the model space are instead described by
suitable linear relations.
\end{abstract}
\maketitle
\section{Introduction}
As it is known from \cite{Dijksma05}, the A-model for
rank one perturbations of class $\fH_{-m-2}\setm\fH_{-m-1}$,
$m\in\bbN$, of a lower semibounded self-adjoint operator $L$ in
$\fH_0$ is considered in general from the perspective of an
indefinite inner product space (Pontryagin space), which we
denote by $\cH_{\mrm{A}}$. Here
$(\fH_n,\braket{\cdot,\cdot}_n)_{n\in\bbZ}$
is the scale of Hilbert spaces associated with $L$, and the
$\fH_n$-scalar product is defined via an operator
$b_n(L):=\prod_{j=1}^n(L-z_j)$ with some fixed model parameters
$z_j\in\res L\mcap\bbR$:
$\braket{\cdot,\cdot}_n:=\braket{\cdot,b_n(L)\cdot}_0$.
The rank of indefiniteness of $\cH_{\mrm{A}}$
depends on the Gram matrix $\cG_{\mrm{A}}$ that determines
an indefinite inner product $[\cdot,\cdot]_{\mrm{A}}$ in
$\cH_{\mrm{A}}$.
By definition it is assumed that $\cG_{\mrm{A}}$
is invertible and Hermitian, but for perturbations of
class $\fH_{-4}$ or higher (\ie $m\geq2$),
this is not sufficient in order to apply the extension theory of
operators in $\cH_{\mrm{A}}$.
It appears that for such perturbations additional
restrictions imposed on $\cG_{\mrm{A}}$ are needed;
for example, for mutually equal model parameters
$z_j$, the Gram matrix
$\cG_{\mrm{A}}=([\cG_{\mrm{A}}]_{jj^\prime})$
must be of an anti-triangular form:
\begin{equation}
\begin{split}
[\cG_{\mrm{A}}]_{jj^\prime}=&
[\cG_{\mrm{A}}]_{j^\prime j}\in\bbR\,,\quad
j,j^\prime\in\{1,\ldots,m\}\,,
\\
[\cG_{\mrm{A}}]_{jj^\prime}=&0\,,\quad
j\in\{1,\ldots,m-1\}\,,\quad
j^\prime\in\{1,\ldots,m-j\}\,,
\\
[\cG_{\mrm{A}}]_{jm}=&
[\cG_{\mrm{A}}]_{j+1,m-1}\,,\quad
j\in\{1,\ldots,m-1\}\,.
\end{split}
\label{eq:1}
\end{equation}
More generally (\cite[Theorem~3.2]{Dijksma05}),
if at least two of the $z_j$'s are equal, then
$\cH_{\mrm{A}}$ must have a nontrivial rank of indefiniteness.
In contrast, if the points $z_j$ are all mutually distinct,
then $\cH_{\mrm{A}}$ can be considered as a Hilbert space,
\ie there exists a positive matrix $\cG_{\mrm{A}}$
satisfying all necessary conditions required for the
application of the theory of extensions to $\cH_{\mrm{A}}$
of $L$.

The main goal of this note is to demonstrate
that, for equal $z_j$'s, we still can extract
a Hilbert space model from the A-model provided that
\begin{equation}
[\cG_{\mrm{A}}]_{mm}>0\,,\quad
[\cG_{\mrm{A}}]_{m-1,m}=
[\cG_{\mrm{A}}]_{m,m-1}\in\bbR
\label{eq:2}
\end{equation}
for $m\geq2$.
In fact, we consider rank-$d$ perturbations, with
an arbitrary $d\in\bbN$, so that actually we have that
$\cG_{\mrm{A}}=([\cG_{\mrm{A}}]_{\sigma j,\sigma^\prime j^\prime})$
is a $dm\times dm$ Gram matrix;
the indices $\sigma$, $\sigma^\prime$ range over an
index set $\cS$ of cardinality $d\in\bbN$.
The conditions in \eqref{eq:1}, \eqref{eq:2} are then modified
appropriately
(see \eqref{eq:GAcomm} and \eqref{eq:A2}).

In the A-model, singular perturbations of $L$
in $\cH_{\mrm{A}}$ are specified by the extensions
of a densely defined, closed, symmetric operator
$\Amin$ in $\cH_{\mrm{A}}$, provided an invertible
Hermitian $\cG_{\mrm{A}}$
satisfies appropriate conditions
(for equal $z_j$'s these are as in \eqref{eq:1}).
We recall that
$\Amin$ is the adjoint in $\cH_{\mrm{A}}$ of
the restriction $\Amax\supseteq\Amin$ to $\cH_{\mrm{A}}$ of
the triplet adjoint $\Lmax$ of $\Lmin$. The triplet adjoint
is taken with respect to the
Hilbert triple $\fH_{m}\subseteq\fH_0\subseteq\fH_{-m}$.
The operator $\Lmin$ is densely defined, closed, symmetric
in $\fH_m$, has defect numbers $(d,d)$, and is essentially
self-adjoint in $\fH_0$, whose closure is $L$.
As is usual in extension theory,
an extension $A_\Theta\in\mrm{Ext}(\Amin)$
is parametrized by a linear relation $\Theta$ in
$\bbC^d$ according to
$\dom A_\Theta=\{f\in\dom\Amax\vrt\Gamma f\in\Theta\}$,
where $\Gamma:=(\Gamma_0,\Gamma_1)
\co\dom\Amax\lto\bbC^{d}\times\bbC^d$ defines the
boundary triple $(\bbC^d,\Gamma_0,\Gamma_1)$
for $\Amax=\Amin^*$.

To explain our main idea, let us now consider the
A-model with equal model parameters, $z_j=z_1$. For
simplicity we let $d=1$.
Let $\cH^{\min}_{\mrm{A}}:=\cH_{\mrm{A}}\mcap\fH_{m-2}$.
The subscript ``min'', indicating the minimality of
the space, is due to the following fact.
Because $\cH_{\mrm{A}}$ is the direct sum of
$\fH_m$ and an $m$-dimensional space $\fK_{\mrm{A}}$
spanned by the singular elements
$h_j\in\fH_{-m-2+2j}\setm\fH_{-m-1+2j}$, we have that
$\fK^{\min}_{\mrm{A}}\subseteq\fK\subseteq\fH_{-m}$,
where $\fK^{\min}_{\mrm{A}}:=\fK_{\mrm{A}}\mcap\fH_{m-2}$
is a minimal subset contained in $\fK_{\mrm{A}}$ in
the sense that $\fK_{\mrm{A}}\mcap\fK_{m-1}=\{0\}$.

Consider the domain restriction
$\Amax\vrt_{\cH^{\min}_{\mrm{A}}}$ to
$\cH^{\min}_{\mrm{A}}=\fH_m\dsum\fK^{\min}_{\mrm{A}}$
of $\Amax$. Let $\Bmax$ denote a linear relation in
$\cH_{\mrm{A}}$ defined by the componentwise sum of
(the graph of)
$\Amax\vrt_{\cH^{\min}_{\mrm{A}}}$ and
$\{0\}\times\cH^\bot_{\mrm{A}}$. Here $\cH^\bot_{\mrm{A}}$
denotes the orthogonal complement in $\cH_{\mrm{A}}$ of
$\cH^{\min}_{\mrm{A}}$, which is a subset of
$\fK_{\mrm{A}}$.
By the construction,
the adjoint $\Bmin:=\Bmax^*$ in $\cH_{\mrm{A}}$
is a linear relation given by the componentwise sum of
(the graph of)
$\Amin\vrt_{\cH^{\min}_{\mrm{A}}}$ and
$\{0\}\times\cH^\bot_{\mrm{A}}$.
Assuming only the invertibility and the Hermiticity of
$\cG_{\mrm{A}}$, the operator $\Amin$ differs from
$\Amin^\prime:=\Amax\vrt_{\ker\Gamma}$
(although $\dom\Amin=\dom\Amin^\prime$), \ie
$\Amin$ is not symmetric; the symmetry of
$\Amin=\Amin^\prime$ is ensured by \eqref{eq:1}.
Now the key point is that, without assumption
\eqref{eq:1}, but instead assuming
$[\cG_{\mrm{A}}]_{m-1,m}=[\cG_{\mrm{A}}]_{m,m-1}$
(the second condition in \eqref{eq:2}), it holds
\[
(\Amin-\Amin^\prime)(\dom\Amin\mcap\cH^{\min}_{\mrm{A}})
\subseteq\cH^\bot_{\mrm{A}}
\]
\ie $\Bmin$ is a symmetric linear relation in $\cH_{\mrm{A}}$.
By the same reasoning one shows that $\Bmin$ is also closed.
Sequentially,
one can apply the extension theory for $\Bmin$, as is
done for $\Amin$.

For $\cG_{\mrm{A}}$ as in \eqref{eq:1},
the Weyl function corresponding to a boundary triple
for $\Amax$ determined by $\Gamma$
is the sum of the Krein $Q$-function $q$ of
$\Lmin$ and a generalized Nevanlinna function $r$
(see \eg \cite[Section~4]{Behrndt11} for the terminology)
defined by
\[
r(z):=-\sum_{j=1}^m\frac{[\cG_{\mrm{A}}]_{mj}}{(z-z_1)^{m-j+1}}\,,
\quad z\in\bbC\setm\{z_1\}\,.
\]
Likewise,
for $\cG_{\mrm{A}}$ as in \eqref{eq:2},
the Weyl function corresponding to the boundary triple
for $\Bmax$, which is determined by restriction to $\dom\Bmax$
of $\Gamma$, is the sum of the same Krein $Q$-function $q$
and now a Nevanlinna function $\hat{r}$ defined by
\[
\hat{r}(z):=\frac{[\cG_{\mrm{A}}]_{mm}}{\hat{\Delta}-z}\,,
\quad z\in\bbC\setm\{\hat{\Delta}\}
\]
with some real number $\hat{\Delta}$.
The strict inequality $[\cG_{\mrm{A}}]_{mm}>0$
in \eqref{eq:2} is closely related to the fact that
the subspace
$\cH^{\min}_{\mrm{A}}=(\fH_m\dsum\fK^{\min}_{\mrm{A}},
[\cdot,\cdot]_{\mrm{A}})$ of $\cH_{\mrm{A}}$
is a Hilbert space iff $[\cG_{\mrm{A}}]_{mm}>0$.
Thus, for example, one may take $\cG_{\mrm{A}}$
as the Gram matrix of vectors $h_j$ generating $\fK_{\mrm{A}}$,
in which case $[\cG_{\mrm{A}}]_{jj^\prime}=
\braket{h_j,h_{j^\prime}}_{-m}$, and the conditions in
\eqref{eq:2} are all satisfied. In contrast, the so
defined $\cG_{\mrm{A}}$ does not satisfy \eqref{eq:1}.
We remark that, for $m=1$, we have $\hat{\Delta}=z_1$,
and hence $\hat{r}=r$, as it should follow from
$\cH^{\min}_{\mrm{A}}=\cH_{\mrm{A}}$.
We also remark that an analogous development of extension
theory for $\Bmin$
takes place in the peak model for singular perturbations,
\cf \cite{Jursenas18}.

Because the Weyl function $q+\hat{r}$ of $\Bmin$
is a (uniformly strict) Nevanlinna function, it follows from
\cite[Theorem~2.2]{Langer77}
that $q+\hat{r}$ is the Weyl function of some closed simple
symmetric operator, corresponding to a certain boundary triple.
Following the terminology in \cite{Dijksma18},
it is precisely in this sense what we mean by saying that
the A-model with mutually equal model parameters leads to
a Hilbert space model (of the function $q+\hat{r}$).
For example, a simple symmetric operator may be considered
as the operator of multiplication by an independent
variable in a reproducing kernel Hilbert space
induced by the Nevanlinna pair $(1,q+\hat{r})$;
see \eg
\cite[Theorem~6.1]{Behrndt09},
\cite[Theorem~4.10]{Behrndt11},
\cite[Remark~2.6]{Derkach09a}.

Having determined the extensions to $\cH_{\mrm{A}}$ of $\Lmin$
one then interprets singular perturbations of $L$
by means of the compressions to $\fH_m$
of their resolvents.
Thus, for $d=1$, $B_\Theta\in\mrm{Ext}(\Bmin)$,
$\Theta\in\bbC\mcup\{\infty\}$,
the compressed resolvent of $B_\Theta$ is represented
in the generalized sense according to
\[
P_{\fH_m}(B_\Theta-z)^{-1}\vrt_{\fH_m}=
(L-z)^{-1}+
\frac{\braket{g(\ol{z}),\cdot}
(L-z)^{-1}h_m}{\Theta-q(z)-\hat{r}(z)}
\]
for a suitable $z\in\res L$. Here
$P_{\fH_m}$ is a projection in $\cH_{\mrm{A}}$ onto $\fH_m$,
$g(\ol{z})\in\fH_{-m}\setm\fH_{-m+1}$ is the
eigenvector of $\Lmax$ corresponding to the eigenvalue $\ol{z}$
(in particular $h_1=g(z_1)$), and
$\braket{\cdot,\cdot}$ is the duality pairing between
$\fH_{-m}$ and $\fH_m$.
By the above resolvent formula one concludes
that the spectral properties of (super) singular perturbations
in the A-model with equal model parameters
can be described by Nevanlinna functions.

The reasoning behind the above mentioned
interpretation of singular perturbations is that
there exists a bijective correspondence between
Nevanlinna families and generalized resolvents
of $\Lmin$, and the correspondence is established
via a generalized Krein--Naimark resolvent formula.
Thus, to a rational Nevanlinna function $\hat{r}-\Theta$,
with a real $\Theta$, there corresponds a self-adjoint
extension $\wtB$ of $\Lmin$ in some larger Hilbert space
$\wtilde{\fH}\supseteq\fH_m$, and such that
$\wtB\mcap L=\Lmin$. For more details the
reader may refer to
\cite{Dijksma18,Derkach09,Derkach95,Derkach94,Derkach91}.
\section{A brief overview of the A-model with
equal model parameters}
Here we restate the main results from
\cite{Jursenas19,Dijksma05}. The main tools and terminology
used in the theory of boundary relations of symmetric operators
(or linear relations)
are as in
\cite{Derkach17,Derkach15,Hassi13,Derkach12,Behrndt11,Hassi09,Hassi07,Derkach95} and in references therein.

We consider a lower semibounded self-adjoint
operator $L$ in a Hilbert space $\fH_0$, and we let
$(\fH_n)_{n\in\bbZ}$ be the scale of Hilbert spaces associated
with $L$. The scalar product in $\fH_n$ is conjugate linear in
the first factor and is defined via the
scalar product $\braket{\cdot,\cdot}_0$ in $\fH_0$
according to
\[
\braket{\cdot,\cdot}_n:=
\braket{b_n(L)^{1/2}\cdot,b_n(L)^{1/2}\cdot}_0\,,\quad
b_n(L):=(L-z_1)^n
\]
for some fixed model parameter $z_1\in\res L\mcap\bbR$
($\res L$ denotes the resolvent set of $L$, and similarly
for other operators).
To $L=L_0$ one associates a self-adjoint operator
$L_n:=L\vrt_{\fH_{n+2}}$ in $\fH_n$, and satisfying
$L_{n+1}\subset L_n$ and $\res L_n=\res L$.
For the reasons just described
we sometimes omit the subscript $n$ in $L_n$.

Let us fix $m$, $d\in\bbN$.
Let $\{\vp_\sigma\in\fH_{-m-2}\setm\fH_{-m-1}\}$ be the family
of linearly independent functionals;
$\sigma$ ranges over an index set $\cS$ of cardinality $d$.
The symmetric restriction $L_{\min}$ of $L$ to
the domain of $f\in\fH_{m+2}$ such that
$\braket{\vp_\sigma,f}=0$, for all $\sigma$, is a
densely defined, closed, symmetric operator in $\fH_m$,
and has defect numbers $(d,d)$. It is also
essentially self-adjoint operator in $\fH_0$.
The duality pairing $\braket{\cdot,\cdot}$ is defined
via the $\fH_0$-scalar product in a usual way.
We also define a vector valued functional $\vp$
via
$\braket{\vp,\cdot}=(\braket{\vp_\sigma,\cdot})
\co\fH_{m+2}\lto\bbC^d$; hence
$\Lmin=L_m\vrt_{\{f\in\fH_{m+2}\vrt\braket{\vp,f}=0\}}$.

The triplet adjoint $\Lmax$ of $\Lmin$
corresponding to the Hilbert triple
$\fH_m\subset\fH_0\subset\fH_{-m}$ is the operator
extending $L_{-m+2}$ to the domain
$\fH_{-m+2}\dsum\fN_z(L_{\max})$ (direct sum)
for $z\in\res L$. The eigenspace
$\fN_z(\Lmax)$ ($:=\ker(\Lmax-z)$) is the
linear span of the elements $g_\sigma(z)$
defined in the generalized sense according to
\[
g_\sigma(z):=(L-z)^{-1}\vp_\sigma\in\fH_{-m}\setm\fH_{-m+1}\,.
\]

Define an $md$-dimensional linear space
\[
\fK_{\mrm{A}}:=\spn\{h_\alpha\vrt
\alpha=(\sigma,j)\in\cS\times J\}\,,
\quad
J:=\{1,2,\ldots,m\}
\]
spanned by the elements
\[
h_{\sigma j}:=b_j(L)^{-1}\vp_\sigma\in
\fH_{-m-2+2j}\setm\fH_{-m-1+2j}\,.
\]
From here it follows that
$\fK^{\min}_{\mrm{A}}\subseteq\fK_{\mrm{A}}\subseteq\fH_{-m}$
with
\[
\fK^{\min}_{\mrm{A}}:=\fK_{\mrm{A}}\mcap\fH_{m-2}
=h_m(\bbC^d)\,,
\quad
h_m(c):=\sum_{\sigma}c_\sigma h_{\sigma m}\,,
\quad
c=(c_\sigma)\in\bbC^d
\]
and that in particular
$\fK^{\min}_{\mrm{A}}=\fK_{\mrm{A}}$ for $m=1$.
Note that $\fK_{\mrm{A}}\mcap\fH_{m-1}=\{0\}$.

Because the system $\{h_\alpha\}$ is linearly independent,
the matrix
\[
\wtcG_{\mrm{A}}=([\wtcG_{\mrm{A}}]_{\alpha\alpha^\prime})
\in[\bbC^{md}]\,,\quad
[\wtcG_{\mrm{A}}]_{\alpha\alpha^\prime}:=
\braket{h_\alpha,h_{\alpha^\prime}}_{-m}
\]
is the Gram matrix of vectors generating $\fK_{\mrm{A}}$;
hence it is positive definite, Hermitian.
One establishes a bijective correspondence
\[
\fK_{\mrm{A}}\ni k\leftrightarrow d(k)=(d_\alpha(k))\in\bbC^{md}
\]
via
\[
k=\sum_\alpha d_\alpha(k)
h_\alpha\,,\quad
d(k)=\wtcG^{-1}_{\mrm{A}}\braket{h,k}_{-m}\,,
\quad
\braket{h,\cdot}_{-m}=(\braket{h_\alpha,\cdot}_{-m})\,.
\]
Here and in what follows $d(\cdot)$ is interpreted as
a (bounded) vector valued functional from $\fK_{\mrm{A}}$
to $\bbC^{md}$.

Let us define the matrix
\[
\wtcG^{\min}_{\mrm{A}}=
([\wtcG^{\min}_{\mrm{A}}]_{\sigma\sigma^\prime})
\in[\bbC^{d}]\,,\quad
[\wtcG^{\min}_{\mrm{A}}]_{\sigma\sigma^\prime}:=
\braket{h_{\sigma m},h_{\sigma^\prime m}}_{-m}
\]
which is the Gram matrix of vectors generating
$\fK^{\min}_{\mrm{A}}$. Thus $\wtcG^{\min}_{\mrm{A}}$
is also positive definite, Hermitian, and one therefore
establishes a bijective correspondence
\[
\fK^{\min}_{\mrm{A}}\ni h_m(c)\leftrightarrow
c\in\bbC^{d}
\]
via
\[
c=(\wtcG^{\min}_{\mrm{A}})^{-1}\braket{h_m,h_m(c)}_{-m}\,,
\quad
\braket{h_m,\cdot}_{-m}=(\braket{h_{\sigma m},\cdot}_{-m})\,.
\]
On the other hand,
because $\fK^{\min}_{\mrm{A}}\subseteq\fK$,
to each $k=h_m(c)\in\fK^{\min}_{\mrm{A}}$ there
corresponds $d(k)=\eta(c)\in\bbC^{md}$, where
\[
\eta(c):=(\delta_{jm}c_\sigma)\,.
\]

Consider an indefinite inner product space
\[
\cH_{\mrm{A}}:=(\fH_m\dsum\fK_{\mrm{A}},
[\cdot,\cdot]_{\mrm{A}})
\]
equipped with an indefinite metric
\[
[f+k,f^\prime+k^\prime]_{\mrm{A}}:=
\braket{f,f^\prime}_m+
\braket{d(k),\cG_{\mrm{A}}d(k^\prime)}_{\bbC^{md}}
\]
for $f$, $f^\prime\in\fH_m$ and $k$, $k^\prime\in\fK_{\mrm{A}}$.
The matrix $\cG_{\mrm{A}}=([\cG_{\mrm{A}}]_{\alpha\alpha^\prime})$
is called the Gram matrix of the A-model; it is
initially assumed to be invertible and Hermitian,
but otherwise arbitrary. Thus in particular
$\cG_{\mrm{A}}\neq0$.
Clearly if $\cG_{\mrm{A}}$ is
positive, then $\cH_{\mrm{A}}$ becomes a Hilbert
space. Otherwise $\cH_{\mrm{A}}$ is a Pontryagin space.

For an appropriate $\cG_{\mrm{A}}$,
the extensions to $\cH_{\mrm{A}}$ of $\Lmin$
are the restrictions to $\cH_{\mrm{A}}$ of the triplet
adjoint $\Lmax$.
Let
\[
\Amax:=\Lmax\mcap\cH^2_{\mrm{A}}\,.
\]
Here and in what follows operators are frequently
identified with their graphs.
The operator $\Amax$ admits the following representation:
\begin{align*}
\Amax=&
\{(f^\#+h_{m+1}(c)+k,L_mf^\#+z_1h_{m+1}(c)+\wtk)\vrt
f^\#\in\fH_{m+2}\,;
\\
&c\in\bbC^d\,;\,k,\wtk\in\fK_{\mrm{A}}\,;\,
d(\wtk)=\fM_dd(k)+\eta(c) \}\,.
\end{align*}
An element $h_{m+1}(c)\in\fH_m\setm\fH_{m+1}$ is
defined by
\[
h_{m+1}(c):=\sum_\sigma c_\sigma h_{\sigma,m+1}\,,
\quad
h_{\sigma,m+1}:=b_{m+1}(L)^{-1}\vp_\sigma\,.
\]
The matrix $\fM_d:=\fM\op\cdots\op \fM$ ($d$ times)
is the matrix direct sum of $d$ matrices
$\fM=(\fM_{jj^\prime})\in[\bbC^m]$ defined as follows:
For $m\geq2$
\[
\fM_{jj^\prime}:=
1_{J\setm\{m\}}(j)(\delta_{jj^\prime}z_1+
1_{J\setm\{1\}}(j^\prime)\delta_{j+1,j^\prime} )
+\delta_{jm}\delta_{j^\prime m}z_1
\]
for $j$, $j^\prime\in J$; here $1_X$ is the characteristic
function of a set $X$.
For $m=1$, $\fM:=z_1$.

By direct computation,
the boundary form of $A_{\max}$ is represented in the form
\begin{align*}
[f,\Amax g]_{\mrm{A}}-[\Amax f,g]_{\mrm{A}}=&
\braket{d(k),(\cG_\fM-\cG^*_\fM)d(k^\prime)}_{\bbC^{md}}
\\
&+\braket{\Gamma_0f,\Gamma_1g}_{\bbC^d}
-\braket{\Gamma_1f,\Gamma_0g}_{\bbC^d}\,,
\end{align*}
\[
\cG_\fM:=\cG_{\mrm{A}}\fM_d
\]
with
$f=f^\#+h_{m+1}(c)+k\in\dom A_{\max}$;
$g=g^\#+h_{m+1}(c^\prime)+k^\prime\in\dom A_{\max}$;
$f^\#,g^\#\in\fH_{m+2}$; $c,c^\prime\in\bbC^d$;
$k,k^\prime\in\fK_{\mrm{A}}$.
The operator
$\Gamma:=(\Gamma_0,\Gamma_1)$ from $\dom\Amax$ to
$\bbC^d\times\bbC^d$ is defined by
\begin{align*}
\Gamma_0(f^\#+h_{m+1}(c)+k):=&c\,,
\\
\Gamma_1(f^\#+h_{m+1}(c)+k):=&
\braket{\vp,f^\#}-[\cG_{\mrm{A}}d(k)]_m
\end{align*}
with
\[
[\cG_{\mrm{A}}d(k)]_m:=([\cG_{\mrm{A}}d(k)]_{\sigma m})
\in\bbC^d\,.
\]
In the next lemma we give a description of the adjoint
of $\Amax$ and, moreover, we show that $\Gamma$
is surjective.
By considering $\Gamma$ as a single-valued linear relation
from $\cH^2_{\mrm{A}}$ to $\bbC^{2d}$ with
$\dom\Gamma=\Amax$, \ie
\[
\Gamma=\{\bigl((f,\Amax f),(\Gamma_0f,\Gamma_1f)\bigr)\vrt
f\in\dom\Amax \}
\]
we recall that its Krein space adjoint
$\Gamma^{[*]}$ is a linear relation from $\bbC^{2d}$
to $\cH^2_{\mrm{A}}$, and it consists of
$\bigl((\chi,\chi^\prime),(g,g^\prime)\bigr)$
such that $(\forall f\in\dom\Amax)$
\begin{equation}
[f,g^\prime]_{\mrm{A}}-
[\Amax f,g]_{\mrm{A}}=
\braket{\Gamma_0f,\chi^\prime}_{\bbC^d}-
\braket{\Gamma_1f,\chi}_{\bbC^d}\,.
\label{eq:Gammaadj}
\end{equation}
\begin{lem}\label{lem:Amax}
Similar to $\Amax$, define the operator
$\Amax^\prime$ in $\cH_{\mrm{A}}$ by
\begin{align*}
\Amax^\prime:=&
\{(f^\#+h_{m+1}(c)+k,L_mf^\#+z_1h_{m+1}(c)+\wtk^\prime)\vrt
f^\#\in\fH_{m+2}\,;
\\
&c\in\bbC^d\,;\,k,\wtk^\prime\in\fK_{\mrm{A}}\,;\,
d(\wtk^\prime)=\cG^{-1}_{\mrm{A}}\cG^*_\fM d(k)+\eta(c) \}\,.
\end{align*}
The following statements hold:
\begin{itemize}
\item[$\mrm{(i)}$]
Consider $\Gamma=(\Gamma_0,\Gamma_1)$ as a single-valued
linear relation with $\dom\Gamma=\Amax$. Let
$\Gamma^{[*]}$ be its Krein space adjoint. Then
the inverse $(\Gamma^{[*]})^{-1}=(\Gamma_0,\Gamma_1)$
is a single-valued linear relation with
$\ran\Gamma^{[*]}=\Amax^\prime$.
Moreover, $\Gamma$ is closed and surjective.
\item[$\mrm{(ii)}$]
The adjoint in $\cH_{\mrm{A}}$ of a closed operator
$\Amax$ is the operator
\[
\Amin:=\Amax^*=\Amax^\prime\vrt_{\ker\Gamma}\,.
\]
\item[$\mrm{(iii)}$]
Define the operator
\[
\Amin^\prime:=\Amax\vrt_{\ker\Gamma}
\]
in $\cH_{\mrm{A}}$.
Then $\Amin^\prime$ is closed, and
its adjoint in $\cH_{\mrm{A}}$
is $\Amin^{\prime\,*}=\Amax^\prime$.
\end{itemize}
\end{lem}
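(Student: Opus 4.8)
The plan is to prove the three items in the order \textbf{(ii)}, \textbf{(iii)}, \textbf{(i)}, since the description of $\Amin=\Amax^*$ drives everything else; throughout only the invertibility and Hermiticity of $\cG_{\mrm{A}}$ will be used. The main tool is a \emph{modified Green identity}: for $f=f^\#+h_{m+1}(c)+k$ and $g=g^\#+h_{m+1}(c^\prime)+k^\prime$ in $\dom\Amax=\dom\Amax^\prime$,
\[
[f,\Amax^\prime g]_{\mrm{A}}-[\Amax f,g]_{\mrm{A}}=
\braket{\Gamma_0f,\Gamma_1g}_{\bbC^d}-\braket{\Gamma_1f,\Gamma_0g}_{\bbC^d}\,.
\]
This follows from the boundary-form formula for $\Amax$ stated above once one checks $[f,\Amax g-\Amax^\prime g]_{\mrm{A}}=\braket{d(k),(\cG_\fM-\cG_\fM^*)d(k^\prime)}_{\bbC^{md}}$, a short computation using $\cG_\fM=\cG_{\mrm{A}}\fM_d$, $\cG_\fM^*=\fM_d^*\cG_{\mrm{A}}$ (Hermiticity of $\cG_{\mrm{A}}$), the defining constraints $d(\wtk)=\fM_dd(k)+\eta(c)$ and $d(\wtk^\prime)=\cG_{\mrm{A}}^{-1}\cG_\fM^*d(k^\prime)+\eta(c^\prime)$, and the definition of $[\cdot,\cdot]_{\mrm{A}}$. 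Conjugating and interchanging $f\leftrightarrow g$ gives the companion identity $[f,\Amax g]_{\mrm{A}}-[\Amax^\prime f,g]_{\mrm{A}}=\braket{\Gamma_0f,\Gamma_1g}_{\bbC^d}-\braket{\Gamma_1f,\Gamma_0g}_{\bbC^d}$. I would also settle here the two elementary facts about $\Gamma$ needed later. Surjectivity: given $(c_*,c_{**})$, pick $f^\#\in\fH_{m+2}$ with $\braket{\vp,f^\#}=c_{**}$ (possible since the $\vp_\sigma$ are linearly independent bounded functionals on $\fH_{m+2}$) and put $f=f^\#+h_{m+1}(c_*)$. Closedness: if $(f_n,\Amax f_n)\to(f,\Amax f)$ in $\cH_{\mrm{A}}^2$, comparing $\fK_{\mrm{A}}$-components and using injectivity of $\eta$ forces $c_n\to c$, $k_n\to k$, hence $h_{m+1}(c_n)\to h_{m+1}(c)$ in $\fH_m$; subtracting, $f_n^\#\to f^\#$ and $L_mf_n^\#\to L_mf^\#$ in $\fH_m$, so closedness of $L_m$ in $\fH_m$ gives $f^\#\in\fH_{m+2}$, whence $\Gamma f_n\to\Gamma f$ by continuity of $f^\#\mapsto\braket{\vp,f^\#}$ on $\fH_{m+2}$.

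For \textbf{(ii)} I would compute $\Amax^*$ directly. Decompose $g=g_\fH+g_\fK$, $g^\prime=g_\fH^\prime+g_\fK^\prime$ along $\cH_{\mrm{A}}=\fH_m\dsum\fK_{\mrm{A}}$, assume $(g,g^\prime)\in\Amax^*$, and test $[\Amax f,g]_{\mrm{A}}=[f,g^\prime]_{\mrm{A}}$ against $f=f^\#+h_{m+1}(c)+k$ for three choices. Taking $c=0$, $k=0$ yields $\braket{L_mf^\#,g_\fH}_m=\braket{f^\#,g_\fH^\prime}_m$ for all $f^\#\in\fH_{m+2}$, so self-adjointness of $L_m$ in $\fH_m$ gives $g_\fH\in\fH_{m+2}$, $g_\fH^\prime=L_mg_\fH$, whence also $\Gamma_0g=0$ (the $h_{m+1}$-component of $g$ must vanish). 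Taking $k=0$ with $c$ arbitrary, cancelling the $f^\#$-terms and using $\braket{h_{\sigma,m+1},(L-z_1)g_\fH}_m=\braket{\vp_\sigma,g_\fH}$, yields $[\cG_{\mrm{A}}d(g_\fK)]_m=\braket{\vp,g_\fH}$, i.e. $\Gamma_1g=0$. Taking $c=0$ with $k$ arbitrary, using bijectivity of $d(\cdot)$ and invertibility of $\cG_{\mrm{A}}$, yields $d(g_\fK^\prime)=\cG_{\mrm{A}}^{-1}\cG_\fM^*d(g_\fK)$. Comparing with the defining formula for $\Amax^\prime$, these three statements say precisely $(g,g^\prime)\in\Amax^\prime\vrt_{\ker\Gamma}$. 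Conversely, for $g\in\ker\Gamma$ the modified Green identity (with $f\in\dom\Amax$ arbitrary) collapses to $[\Amax f,g]_{\mrm{A}}=[f,\Amax^\prime g]_{\mrm{A}}$, so $(g,\Amax^\prime g)\in\Amax^*$; hence $\Amin=\Amax^*=\Amax^\prime\vrt_{\ker\Gamma}$. Repeating the identical argument with $\Amax$ and $\Amax^\prime$ interchanged (and the companion Green identity for the converse) gives $\Amax^{\prime\,*}=\Amax\vrt_{\ker\Gamma}=\Amin^\prime$.

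Now \textbf{(iii)} and \textbf{(i)}. The operator $\Amin^\prime=\Amax\mcap(\ker\Gamma\times\cH_{\mrm{A}})$ is the intersection of the closed operator $\Amax$ with a closed subspace ($\ker\Gamma$ is closed since $\Gamma$ is closed), hence closed; the operator $\Amax^\prime$ is closed by the same graph-continuity argument as for $\Gamma$ and densely defined since $\dom\Amax^\prime=\dom\Amax\supseteq\fH_{m+2}+\fK_{\mrm{A}}$ (and $\ker\Gamma$ is likewise dense in $\cH_{\mrm{A}}$, so $(\Amin^\prime)^*$ is single-valued); therefore $\Amin^{\prime\,*}=(\Amax^{\prime\,*})^*=\Amax^{\prime\,**}=\Amax^\prime$ by the last identity of the previous paragraph, which is \textbf{(iii)}. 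For \textbf{(i)}, the companion Green identity shows that $\bigl((\Gamma_0g,\Gamma_1g),(g,\Amax^\prime g)\bigr)$ satisfies \eqref{eq:Gammaadj} for every $g\in\dom\Amax$, so $\Amax^\prime\subseteq\ran\Gamma^{[*]}$. Conversely, given $\bigl((\chi,\chi^\prime),(g,g^\prime)\bigr)\in\Gamma^{[*]}$, choose by surjectivity $g_0\in\dom\Amax$ with $\Gamma g_0=(\chi,\chi^\prime)$, subtract the tuple $\bigl((\chi,\chi^\prime),(g_0,\Amax^\prime g_0)\bigr)$, and read off from \eqref{eq:Gammaadj} that $(g-g_0,g^\prime-\Amax^\prime g_0)\in\Amax^*=\Amax^\prime\vrt_{\ker\Gamma}$ by \textbf{(ii)}; this forces $g\in\dom\Amax^\prime$, $\Gamma g=(\chi,\chi^\prime)$ and $g^\prime=\Amax^\prime g$. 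Hence $\Gamma^{[*]}=\{\bigl((\Gamma_0g,\Gamma_1g),(g,\Amax^\prime g)\bigr)\vrt g\in\dom\Amax\}$, which gives $\ran\Gamma^{[*]}=\Amax^\prime$ and, since $g\mapsto(g,\Amax^\prime g)$ is injective, $(\Gamma^{[*]})^{-1}=(\Gamma_0,\Gamma_1)$ is single-valued; closedness and surjectivity of $\Gamma$ were shown in the first paragraph.

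The step I expect to be the main obstacle is the direct computation of $\Amax^*$ in \textbf{(ii)} (and its mirror image for $\Amax^{\prime\,*}$): one must select the three test families and carry the bookkeeping with $[\cdot,\cdot]_{\mrm{A}}$, $d(\cdot)$, $\eta$, the matrices $\cG_{\mrm{A}},\cG_\fM,\fM_d$ and the duality identity $\braket{h_{\sigma,m+1},(L-z_1)\cdot}_m=\braket{\vp_\sigma,\cdot}$, invoking \emph{only} invertibility and Hermiticity of $\cG_{\mrm{A}}$; in particular keeping the Hermitian conjugates straight — so that $\cG_\fM^*$, not $\cG_\fM$, appears in $\Amax^\prime$, and dually $\cG_\fM$ reappears in the swapped computation — is where slips are easiest.
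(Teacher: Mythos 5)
Your proposal is correct, and the computational core is the same as the paper's (the boundary-form identity, the duality relation $\braket{h_{\sigma,m+1},(L-z_1)\,\cdot\,}_m=\braket{\vp_\sigma,\cdot}$, and the identification $\cG_\fM^*=\fM_d^*\cG_{\mrm{A}}$), but the logical organization is genuinely different. The paper works top-down: it expands the defining relation \eqref{eq:Gammaadj} for $\Gamma^{[*]}$ into a single master identity (its equation \eqref{eq:abc}), reads off $\ran\Gamma^{[*]}=\Amax^\prime$ from it, obtains $\Amin$ as the special case $\chi=\chi^\prime=0$, gets surjectivity of $\Gamma$ from $\ker\Gamma^{[*]}=\{0\}$ plus finite-dimensionality of $\bbC^{2d}$, and proves closedness of $\Gamma$ by computing the second Krein space adjoint. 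You work bottom-up: you first compute $\Amax^*$ directly by testing against the three families $f^\#$, $h_{m+1}(c)$, $k$, establish surjectivity of $\Gamma$ constructively and closedness by a graph-convergence argument (recovering $c$ from $\eta(c)=d(\wtk)-\fM_dd(k)$ and using closedness of $L_m$), and only then assemble $\Gamma^{[*]}$ from the modified Green identity plus the already known $\Amax^*$. Your route is somewhat more elementary (no double-adjoint computation for closedness, explicit preimages for surjectivity) at the cost of redoing essentially the same bookkeeping twice (once for $\Amax^*$, once for $\Amax^{\prime\,*}$), where the paper extracts everything from the single identity \eqref{eq:abc}. Two cosmetic points: in the closedness argument you should phrase the hypothesis as \enquote{$(f_n,\Amax f_n)$ converges in $\cH_{\mrm{A}}^2$} rather than presupposing the limit already lies in $\Amax$ (the argument you sketch does prove this); and in step (i) the identity showing $\bigl((\Gamma_0g,\Gamma_1g),(g,\Amax^\prime g)\bigr)\in\Gamma^{[*]}$ is your first modified Green identity, not its companion. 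Neither affects correctness.
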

\begin{proof}
First we remark that
\begin{equation}
\ran\cG^*_{\fM}\subseteq\ran\cG_{\mrm{A}}
\label{eq:A1}
\end{equation}
so that $\Amax^\prime$ is defined correctly.
The inclusion in \eqref{eq:A1} is equivalent to the statement
that
\begin{equation}
(\forall\xi\in\bbC^{md})\,
(\exists\xi^\prime\in\bbC^{md})\;
\cG^*_\fM\xi=\cG_{\mrm{A}}\xi^\prime\,.
\label{eq:A}
\end{equation}

For $m=1$, $\cG_\fM=z_1\cG_{\mrm{A}}$, so
$\xi^\prime=z_1\xi$ solves \eqref{eq:A}
for an arbitrary Hermitian $\cG_{\mrm{A}}$.

For $m\geq2$ we have
\begin{align*}
[\cG_\fM]_{\sigma j,\sigma^\prime j^\prime}=&
z_1
[\cG_{\mrm{A}}]_{\sigma j,\sigma^\prime j^\prime}+
1_{J\setm\{1\}}(j^\prime)
[\cG_{\mrm{A}}]_{\sigma j;\sigma^\prime, j^\prime-1}
\intertext{and hence}
[\cG^*_\fM]_{\sigma j,\sigma^\prime j^\prime}=&
z_1
[\cG_{\mrm{A}}]_{\sigma j,\sigma^\prime j^\prime}+
1_{J\setm\{1\}}(j)
[\cG_{\mrm{A}}]_{\sigma, j-1;\sigma^\prime j^\prime}\,.
\end{align*}
Then $\cG^*_\fM\xi=\cG_{\mrm{A}}\xi^\prime$ reads
\[
[\cG_{\mrm{A}}(\xi^\prime-z_1\xi)]_{\sigma j}=
1_{J\setm\{1\}}(j)[\cG_{\mrm{A}}\xi]_{\sigma,j-1}\,.
\]
Put
\[
\mathring{\cG}_{\mrm{A}}=
([\mathring{\cG}_{\mrm{A}}]_{\alpha\alpha^\prime})
\in[\bbC^{md}]\,,\quad
[\mathring{\cG}_{\mrm{A}}]_{\sigma j,\sigma^\prime j^\prime}
:=1_{J\setm\{1\}}(j)
[\cG_{\mrm{A}}]_{\sigma,j-1;\sigma^\prime j^\prime}\,.
\]
Then
\[
\ran\mathring{\cG}_{\mrm{A}}\subseteq
\ran\cG_{\mrm{A}}\quad\text{and}\quad
\cG_{\mrm{A}}(\xi^\prime-z_1\xi)=
\mathring{\cG}_{\mrm{A}}\xi
\]
and therefore
\[
\xi^\prime=(z_1+\cG^{-1}_{\mrm{A}}\mathring{\cG}_{\mrm{A}} )
\xi
\]
solves \eqref{eq:A} for an arbitrary invertible
Hermitian $\cG_{\mrm{A}}$.

(i)
Letting $g=g^\natural+k_g$ and
$g^\prime=g^{\prime\,\natural}+k_{g^\prime}$
in \eqref{eq:Gammaadj} for some
$g^\natural$, $g^{\prime\,\natural}\in\fH_m$
and $k_g$, $k_{g^\prime}\in\fK_{\mrm{A}}$,
and using that
\[
\braket{\braket{\vp,f^\#},\chi }_{\bbC^d}=
\braket{(L-z_1)f^\#,h_{m+1}(\chi)}_m
\]
and
\[
\braket{[\cG_{\mrm{A}}d(k)]_m,\chi}_{\bbC^d}=
\braket{d(k),\cX\chi}_{\bbC^{md}}
\]
with
\[
\cX=([\cX]_{\alpha\sigma})\in[\bbC^d,\bbC^{md}]\,,
\quad
[\cX]_{\alpha\sigma}:=[\cG_{\mrm{A}}]_{\alpha,\sigma^\prime m}
\]
we find that
$(\forall f^\#\in\fH_{m+2})$
$(\forall c\in\bbC^d)$
$(\forall k\in\fK_{\mrm{A}})$
\begin{align}
0=&\braket{f^\#,g^{\prime\,\natural}-z_1h_{m+1}(\chi)}_m
-\braket{Lf^\#,g^\natural-h_{m+1}(\chi)}_m
\nonumber \\
&+\braket{c,\braket{h_{m+1},g^{\prime\,\natural}-
z_1g^\natural }_m-[\cG_{\mrm{A}}d(k_g)]_m-\chi^\prime }_{\bbC^d}
\nonumber \\
&+\braket{d(k),\cG_{\mrm{A}}d(k_{g^\prime})-
\cG^*_\fM d(k_g)-\cX\chi }_{\bbC^{md}}
\label{eq:abc}
\end{align}
with
\[
\braket{h_{m+1},\cdot}_{m}=
(\braket{h_{\sigma, m+1},\cdot}_{m})\,.
\]
Thus it follows that
\[
g^\natural=g^\#+h_{m+1}(\chi)\,,\quad
g^\#\in\fH_{m+2}\,,\quad
g^{\prime\,\natural}=L_mg^\#+z_1h_{m+1}(\chi)
\]
and
\[
\chi^\prime=
\braket{h_{m+1},(L-z_1)g^\#}_m-[\cG_{\mrm{A}}d(k_g)]_m
=\braket{\vp,g^\#}-[\cG_{\mrm{A}}d(k_g)]_m
\]
and
\[
d(k_{g^\prime})=\cG^{-1}_{\mrm{A}}\cG^*_\fM d(k_g)+
\cG^{-1}_{\mrm{A}}\cX\chi\,,\quad
\cG^{-1}_{\mrm{A}}\cX\chi=\eta(\chi)\,.
\]
This shows that
\[
(\Gamma^{[*]})^{-1}=
\{\bigl((f,\Amax^\prime f),(\Gamma_0f,\Gamma_1f)\bigr)\vrt
f\in\dom\Amax \}\,.
\]
Because
$\ker\Gamma^{[*]}=\mul(\Gamma^{[*]})^{-1}=\{0\}$,
it follows that
$\ol{\ran}\Gamma=\ran\ol{\Gamma}=\bbC^{2d}$,
and it therefore remains to verify that $\Gamma$ is closed.

The closure $\ol{\Gamma}$ is the Krein space adjoint
of $\Gamma^{[*]}$. Thus it consists
$\bigl((g,g^\prime),(\chi,\chi^\prime)\bigr)
\in\cH^2_{\mrm{A}}\times\bbC^{2d}$
such that $(\forall f\in\dom\Amax)$
equation~\eqref{eq:abc} holds, but with
$\cG^*_\fM$ replaced by $\cG_\fM$. By repeating
the subsequent steps as above, one finds that
$\ol{\Gamma}=\Gamma$.

(ii)
The adjoint linear relation $\Amin$ consists of
$(g,g^\prime)\in\cH^2_{\mrm{A}}$ such that
\eqref{eq:abc} holds, but with $\chi=0=\chi^\prime$;
therefore it is the operator as stated in the lemma.

(iii)
By the arguments as in the proof of (i),
$\Amax^{\prime\,*}=\Amin^\prime$; thus $\Amin^\prime$
is a closed operator whose adjoint in $\cH_{\mrm{A}}$
is as stated in the lemma.
\end{proof}
For $m=1$,
the matrix $\cG_\fM=z_1\cG_{\mrm{A}}$ is
automatically Hermitian, while for $m\geq2$,
we have $\cG^*_\fM=\cG_\fM$ iff
\begin{equation}
\begin{split}
[\cG_{\mrm{A}}]_{\sigma j,\sigma^\prime j^\prime}=&
[\cG_{\mrm{A}}]_{\sigma j^\prime,\sigma^\prime j}\,,\quad
j,j^\prime\in J\,,
\\
[\cG_{\mrm{A}}]_{\sigma j,\sigma^\prime j^\prime}=&0\,,\quad
j\in J\setm\{m\}\,,\quad
j^\prime\in\{1,\ldots,m-j\}\,,
\\
[\cG_{\mrm{A}}]_{\sigma j,\sigma^\prime m}=&
[\cG_{\mrm{A}}]_{\sigma,j+1;\sigma^\prime,m-1}\,,\quad
j\in J\setm\{m\}\,.
\end{split}
\label{eq:GAcomm}
\end{equation}
Note that the entries of $\cG_{\mrm{A}}$ in \eqref{eq:GAcomm},
which are diagonal in $\sigma\in\cS$, are real numbers.
Note also that $\wtcG_{\mrm{A}}$ does
not satisfy \eqref{eq:GAcomm}, because
$[\wtcG_{\mrm{A}}]_{\sigma 1,\sigma 1}>0$.

For an Hermitian $\cG_\fM$ we have
$\Amax^\prime=\Amax$, $\Amin^\prime=\Amin$,
and $\Gamma$ is a unitary operator,
$\Gamma^{-1}=\Gamma^{[*]}$. Subsequently, the triple
$(\bbC^d,\Gamma_0,\Gamma_1)$ is a boundary triple
for the adjoint $\Amax=\Amin^*$ of a densely defined, closed,
symmetric operator $\Amin$. An extension
$A_\Theta\in\mrm{Ext}(\Amin)$ of $\Amin$, \ie
an operator satisfying
$\Amin\subseteq A_\Theta\subseteq\Amax$, is parametrized
by a linear relation $\Theta$ in $\bbC^d$ according to
\[
\dom A_\Theta=\{f\in\dom\Amax\vrt\Gamma f\in\Theta\}\,.
\]
In particular, $A_\Theta$ is self-adjoint in $\cH_{\mrm{A}}$
iff $\Theta$ is self-adjoint in $\bbC^d$, because
the adjoint $A^*_\Theta$ in $\cH_{\mrm{A}}$ of $A_\Theta$
is given by $A_{\Theta^*}$, where $\Theta^*$ is the adjoint
in $\bbC^d$ of $\Theta$. The Krein--Naimark resolvent
formula for $A_\Theta$ reads
\[
(A_\Theta-z)^{-1}=
(A_0-z)^{-1}+\gamma_{\Gamma}(z)
(\Theta-M_{\Gamma}(z))^{-1}\gamma_{\Gamma}(\ol{z})^*
\]
for $z\in\res A_0\mcap \res A_\Theta$.
The self-adjoint operator $A_0$ corresponds to
the self-adjoint linear relation $\{0\}\times\bbC^d$ in $\bbC^d$,
and its resolvent is given by
\[
(A_0-z)^{-1}(f+k)=(L_m-z)^{-1}f+
\sum_\alpha[(\fM_d-z)^{-1}d(k)]_\alpha h_\alpha
\]
for $f\in\fH_m$, $k\in\fK_{\mrm{A}}$,
and $z\in\res A_0=\res L\setm\{z_1\}$.
The $\gamma$-field $\gamma_{\Gamma}$ and the Weyl
function $M_{\Gamma}$ corresponding to
$(\bbC^d,\Gamma_0,\Gamma_1)$ are described by
\[
\gamma_{\Gamma}(z)\bbC^d=\fN_z(A_{\max})
=\{\sum_\sigma c_\sigma F_\sigma(z)\vrt
c_\sigma\in\bbC\}\,,\quad
F_\sigma(z):=\frac{g_\sigma(z)}{(z-z_1)^m}
\]
and
\[
M_{\Gamma}(z)=q(z)+r(z)\quad\text{on}\quad\bbC^d
\]
for $z\in\res A_0$. The Krein $Q$-function $q$
of $L_{\min}$ is defined by
\[
q(z)=([q(z)]_{\sigma\sigma^\prime})\in[\bbC^d]\,,\quad
[q(z)]_{\sigma\sigma^\prime}:=
(z-z_1)\braket{\vp_\sigma,(L-z)^{-1}
h_{\sigma^\prime,m+1}}
\]
for $z\in\res L$,
and the generalized Nevanlinna function $r$ is defined by
\[
r(z)=([r(z)]_{\sigma\sigma^\prime})\in[\bbC^d]\,,\quad
[r(z)]_{\sigma\sigma^\prime}:=-\sum_j
\frac{[\cG_{\mrm{A}}]_{\sigma m,\sigma^\prime j}}{
(z-z_1)^{m-j+1}}
\]
for $z\in\bbC\setm\{z_1\}$.

The compressed resolvent of $A_\Theta$ is represented
in the generalized sense according to
\begin{align}
P_{\fH_m}(A_\Theta-z)^{-1}\vrt_{\fH_m}=&
(L-z)^{-1}
\nonumber \\
&+
\sum_\sigma
[(\Theta-M_{\Gamma}(z))^{-1}
\braket{\vp,(L-z)^{-1}\cdot}]_\sigma
(L-z)^{-1}h_{\sigma m}
\label{eq:comp}
\end{align}
for $z\in\res A_0\mcap\res A_\Theta$.
As expected, in the A-model with equal model parameters
the spectral properties of singular rank-$d$ perturbations
of class $\fH_{-4}$ or higher
are described by a generalized Nevanlinna function $M_{\Gamma}$.
\section{Extensions which are linear relations}
Let $j_\star\in J$; then
\[
\fH_m\mcap\fH_{-m-2+2j_\star}=\fH_m
\]
while
\[
\fK_{\mrm{A}}\mcap\fH_{-m-2+2j_\star}=
\spn\{h_{\sigma j}\vrt(\sigma,j)\in\cS\times
\{j_\star,\ldots,m\} \}
\]
is a $d(m-j_\star+1)$-dimensional linear space.
Choosing $j_\star=m$ we therefore construct a
$d$-dimensional subspace $\fK^{\min}_{\mrm{A}}$
of $\fK_{\mrm{A}}$,
which is minimal in the sense that
$\fK_{\mrm{A}}\mcap\fH_{m-1}=\{0\}$.
Let
\[
\cH^{\min}_{\mrm{A}}:=(\fH_m\dsum\fK^{\min}_{\mrm{A}},
[\cdot,\cdot]_{\mrm{A}})\,.
\]
That is, $\cH^{\min}_{\mrm{A}}$ is a subspace of
$\cH_{\mrm{A}}$ equipped with an indefinite metric
\[
[f+h_m(c),f^\prime+h_m(c^\prime)]_{\mrm{A}}=
\braket{f,f^\prime}_m+
\braket{c,\cG^{\min}_{\mrm{A}}c^\prime}_{\bbC^d}
\]
for $f$, $f^\prime\in\fH_m$ and
$c$, $c^\prime\in\bbC^d$. The matrix
\[
\cG^{\min}_{\mrm{A}}=
([\cG^{\min}_{\mrm{A}}]_{\sigma\sigma^\prime})
\in[\bbC^{d}]\,,\quad
[\cG^{\min}_{\mrm{A}}]_{\sigma\sigma^\prime}:=
[\cG_{\mrm{A}}]_{\sigma m,\sigma^\prime m}
\]
where, as previously, $\cG_{\mrm{A}}$ is the Gram
matrix of the A-model; \ie it is invertible and Hermitian.
The matrix $\cG^{\min}_{\mrm{A}}$ is Hermitian, and
the space $\cH^{\min}_{\mrm{A}}$ is a Hilbert space iff
an Hermitian $\cG^{\min}_{\mrm{A}}$ is positive definite.
In this case $\cH^{\min}_{\mrm{A}}$ becomes a subspace
of the positive subspace of the Pontryagin space
$\cH_{\mrm{A}}$.
\begin{lem}\label{lem:cHbot}
Let $\cH^\bot_{\mrm{A}}$ denote the orthogonal complement in
$\cH_{\mrm{A}}$ of $\cH^{\min}_{\mrm{A}}$. Then:
\begin{itemize}
\item[$\mrm{(i)}$]
$\cH^\bot_{\mrm{A}}$ is a subset of $\fK_{\mrm{A}}$
given by
\[
\cH^\bot_{\mrm{A}}=
\{k\in\fK_{\mrm{A}}\vrt
[\cG_{\mrm{A}}d(k)]_m=0 \}\,.
\]
\item[$\mrm{(ii)}$]
Assume that
\begin{equation}
[\cG_{\mrm{A}}]_{\sigma, m-1;\sigma^\prime m}=
[\cG_{\mrm{A}}]_{\sigma m;\sigma^\prime, m-1}\,,\quad
\sigma,\sigma^\prime\in\cS
\label{eq:A2}
\end{equation}
if $m\geq2$.
Then
\[
(\Amax^\prime-\Amax)\fK^{\min}_{\mrm{A}}\subseteq
\cH^\bot_{\mrm{A}}\,.
\]
\end{itemize}
\end{lem}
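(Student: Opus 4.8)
\medskip

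\noindent\textbf{Part (i).}
The plan is to compute the orthogonal complement directly from the definition of the indefinite metric $[\cdot,\cdot]_{\mrm{A}}$. By definition $k\in\cH^\bot_{\mrm{A}}$ iff $[f+h_m(c),k]_{\mrm{A}}=0$ for all $f\in\fH_m$ and all $c\in\bbC^d$. First I would observe that because $\cH^{\min}_{\mrm{A}}=\fH_m\dsum\fK^{\min}_{\mrm{A}}$ contains all of $\fH_m$, any element $g=g^\natural+k_g$ of $\cH_{\mrm{A}}$ that is orthogonal to $\cH^{\min}_{\mrm{A}}$ must in particular be orthogonal to every $f\in\fH_m$; since $[f,g]_{\mrm{A}}=\braket{f,g^\natural}_m$ and $\fH_m$ is a Hilbert space, this forces $g^\natural=0$, i.e.\ $\cH^\bot_{\mrm{A}}\subseteq\fK_{\mrm{A}}$. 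It then remains to impose orthogonality to $h_m(c)$ for all $c$: for $k\in\fK_{\mrm{A}}$, $[h_m(c),k]_{\mrm{A}}=\braket{d(h_m(c)),\cG_{\mrm{A}}d(k)}_{\bbC^{md}}=\braket{\eta(c),\cG_{\mrm{A}}d(k)}_{\bbC^{md}}$ using $d(h_m(c))=\eta(c)=(\delta_{jm}c_\sigma)$. Pairing against $\eta(c)=(\delta_{jm}c_\sigma)$ extracts precisely the $j=m$ block of $\cG_{\mrm{A}}d(k)$, so this equals $\braket{c,[\cG_{\mrm{A}}d(k)]_m}_{\bbC^d}$; requiring this to vanish for all $c\in\bbC^d$ is equivalent to $[\cG_{\mrm{A}}d(k)]_m=0$, which is the claimed description.

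\medskip

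\noindent\textbf{Part (ii).}
Here the plan is to use the explicit representations of $\Amax$ and $\Amax^\prime$ from the statement of Lemma~\ref{lem:Amax} and then invoke part (i). For $m=1$ there is nothing to prove: $\fK^{\min}_{\mrm{A}}=\fK_{\mrm{A}}$ and $\cG_\fM=z_1\cG_{\mrm{A}}$ is automatically Hermitian, so $\Amax^\prime=\Amax$. For $m\geq2$, take $k_0\in\fK^{\min}_{\mrm{A}}$, so $k_0=h_m(c)$ for some $c\in\bbC^d$ and $d(k_0)=\eta(c)$. A general element of $\dom\Amax=\dom\Amax^\prime$ having $k$-component equal to $k_0$ and no $\fH_{m+2}$- or $h_{m+1}$-component is $(k_0,\wtk)\in\Amax$ with $d(\wtk)=\fM_d\,\eta(c)$ (the $\eta(c)$-term in the definition coming from setting the scalar parameter to zero — more carefully, one considers $f=0+0+k_0$, for which $\Gamma_0 f=0$, so the defining relation reads $d(\wtk)=\fM_d d(k_0)+\eta(0)=\fM_d\eta(c)$), and correspondingly $(k_0,\wtk^\prime)\in\Amax^\prime$ with $d(\wtk^\prime)=\cG^{-1}_{\mrm{A}}\cG^*_\fM\eta(c)$. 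Hence $(\Amax^\prime-\Amax)k_0=\wtk^\prime-\wtk$ with $d(\wtk^\prime-\wtk)=\bigl(\cG^{-1}_{\mrm{A}}\cG^*_\fM-\fM_d\bigr)\eta(c)=\cG^{-1}_{\mrm{A}}\bigl(\cG^*_\fM-\cG_\fM\bigr)\eta(c)$. By part (i), to conclude $\wtk^\prime-\wtk\in\cH^\bot_{\mrm{A}}$ I must show $\bigl[\cG_{\mrm{A}}d(\wtk^\prime-\wtk)\bigr]_m=\bigl[(\cG^*_\fM-\cG_\fM)\eta(c)\bigr]_m=0$.

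\medskip

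\noindent\textbf{The crux.}
The remaining task is the purely algebraic identity $\bigl[(\cG^*_\fM-\cG_\fM)\eta(c)\bigr]_m=0$ under hypothesis~\eqref{eq:A2}. Using the formulas for $[\cG_\fM]_{\sigma j,\sigma^\prime j^\prime}$ and $[\cG^*_\fM]_{\sigma j,\sigma^\prime j^\prime}$ displayed in the proof of Lemma~\ref{lem:Amax}, one has
\[
[\cG^*_\fM-\cG_\fM]_{\sigma j,\sigma^\prime j^\prime}
=1_{J\setm\{1\}}(j)[\cG_{\mrm{A}}]_{\sigma,j-1;\sigma^\prime j^\prime}
-1_{J\setm\{1\}}(j^\prime)[\cG_{\mrm{A}}]_{\sigma j;\sigma^\prime,j^\prime-1}\,.
\]
Applying this to $\eta(c)$, whose only nonzero components sit at $j^\prime=m$ and equal $c_{\sigma^\prime}$, and then extracting the $j=m$ block, gives
\[
\bigl[(\cG^*_\fM-\cG_\fM)\eta(c)\bigr]_{\sigma m}
=\sum_{\sigma^\prime}\Bigl([\cG_{\mrm{A}}]_{\sigma,m-1;\sigma^\prime m}
-[\cG_{\mrm{A}}]_{\sigma m;\sigma^\prime,m-1}\Bigr)c_{\sigma^\prime}\,,
\]
which vanishes precisely by~\eqref{eq:A2}. (The two indicator factors are both active here since $m\geq2$.) This is the main — indeed only — obstacle, and it is a short bookkeeping check; the one place to be careful is keeping the $\sigma$- versus $j$-indices straight in $\cX$ and in the block-extraction $[\,\cdot\,]_m$, and noting that hypothesis~\eqref{eq:A2} is exactly the $(m-1,m)$ versus $(m,m-1)$ symmetry that the $j=m$ block of $\cG^*_\fM-\cG_\fM$ sees, nothing more. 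Assembling the pieces: $(\Amax^\prime-\Amax)\fK^{\min}_{\mrm{A}}=\{\wtk^\prime-\wtk\}\subseteq\{k\in\fK_{\mrm{A}}\vrt[\cG_{\mrm{A}}d(k)]_m=0\}=\cH^\bot_{\mrm{A}}$ by part (i), which is the assertion.
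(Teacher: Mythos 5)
Your proposal is correct and follows essentially the same route as the paper: for (i) you test orthogonality against $f\in\fH_m$ and $h_m(c)$ separately, using $d(h_m(c))=\eta(c)$ to extract the $j=m$ block of $\cG_{\mrm{A}}d(k)$; for (ii) you compute $d(\wtk^\prime-\wtk)=(\cG^{-1}_{\mrm{A}}\cG^*_\fM-\fM_d)\eta(c)$ and reduce the membership in $\cH^\bot_{\mrm{A}}$ to the identity $[(\cG^*_\fM-\cG_\fM)\eta(c)]_{\sigma m}=\sum_{\sigma^\prime}([\cG_{\mrm{A}}]_{\sigma,m-1;\sigma^\prime m}-[\cG_{\mrm{A}}]_{\sigma m;\sigma^\prime,m-1})c_{\sigma^\prime}$, which vanishes by \eqref{eq:A2} — exactly the paper's computation.
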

Recall that $\Amax^\prime=\Amax$ if $m=1$.
\begin{proof}
(i)
$\cH^\bot_{\mrm{A}}$ is the set of
$g+k\in\fH_m\dsum\fK_{\mrm{A}}$ such that
$(\forall f\in\fH_m)$ $(\forall c\in\bbC^d)$
\[
0=\braket{f,g}_m+\braket{\eta(c),\cG_{\mrm{A}}d(k)}_{\bbC^d}
=\braket{f,g}_m+\braket{c,[\cG_{\mrm{A}}d(k)]_m}_{\bbC^d}\,;
\]
hence such that $g=0$ and $[\cG_{\mrm{A}}d(k)]_m=0$.

(ii)
We have $(\forall c\in\bbC^d)$
\[
(\Amax^\prime-\Amax)h_m(c)=\wtk^{\prime\prime}\in
\fK_{\mrm{A}}\,,\quad
d(\wtk^{\prime\prime})=
(\cG^{-1}_{\mrm{A}}\cG^*_\fM-\fM_d)\eta(c)\,.
\]
Then $(\forall\sigma\in\cS)$
\begin{align*}
[\cG_{\mrm{A}}d(\wtk^{\prime\prime})]_{\sigma m}=&
[(\cG^*_\fM-\cG_\fM)\eta(c)]_{\sigma m}
\\
=&\sum_{\sigma^\prime}
([\cG_{\mrm{A}}]_{\sigma,m-1;\sigma^\prime m}-
[\cG_{\mrm{A}}]_{\sigma m;\sigma^\prime,m-1} )c_{\sigma^\prime}\,.
\end{align*}
By hypothesis one therefore sees that
$\wtk^{\prime\prime}\in\cH^\bot_{\mrm{A}}$.
\end{proof}
Define a linear relation $\Bmax$ in $\cH_{\mrm{A}}$
by
\[
\Bmax:=\Amax\vrt_{\dom\Amax\mcap\cH^{\min}_{\mrm{A}}}\hsum
(\{0\}\times\cH^\bot_{\mrm{A}})
\]
(the componentwise sum),
where
\[
\Amax\vrt_{\dom\Amax\mcap\cH^{\min}_{\mrm{A}}}=
\Amax\mcap(\cH^{\min}_{\mrm{A}}\times\cH_{\mrm{A}})
\]
is the domain restriction to
$\dom\Amax\mcap\cH^{\min}_{\mrm{A}}$
of $\Amax$. Let also
\[
\Bmin:=\Bmax^*
\]
be the adjoint in $\cH_{\mrm{A}}$ of $\Bmax$.

For $m=1$ we have $\cH^{\min}_{\mrm{A}}=\cH_{\mrm{A}}$
and $\cH^\bot_{\mrm{A}}=\{0\}$, corresponding to
$\fK^{\min}_{\mrm{A}}=\fK_{\mrm{A}}$.
In this case $\Bmax=\Amax$ and $\Bmin=\Amin$ are operators.
But for $m\geq2$, $\Bmax$ has a nontrivial multivalued
part $\mul\Bmax=\cH^\bot_{\mrm{A}}$. The multivalued part of
$\Bmin$ is also $\cH^\bot_{\mrm{A}}$, which is seen
from $\mul\Bmin=(\dom\Bmax)^{\bot}$ and using that
$\fH_{m+2}$ is dense in $\fH_m$.
We have, moreover, the next lemma.
\begin{lem}\label{lem:Bmax}
Assume \eqref{eq:A2} if $m\geq2$. Then
\[
\Bmax=\Amax^\prime\vrt_{\dom\Amax\mcap\cH^{\min}_{\mrm{A}}}\hsum
(\{0\}\times\cH^\bot_{\mrm{A}})
\]
and
\begin{align*}
\Bmin=&\Amin\vrt_{\dom\Amin\mcap\cH^{\min}_{\mrm{A}}}\hsum
(\{0\}\times\cH^\bot_{\mrm{A}})
\\
=&\Amin^\prime\vrt_{\dom\Amin\mcap\cH^{\min}_{\mrm{A}}}\hsum
(\{0\}\times\cH^\bot_{\mrm{A}})\,.
\end{align*}
Moreover, $\Bmin$ is a closed symmetric linear relation in
$\cH_{\mrm{A}}$, whose adjoint in $\cH_{\mrm{A}}$
is the linear relation $\Bmin^*=\Bmax$.
\end{lem}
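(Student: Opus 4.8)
The plan is to establish the three assertions of Lemma~\ref{lem:Bmax} in sequence, leaning on Lemma~\ref{lem:Amax}, Lemma~\ref{lem:cHbot}, and the explicit formulas for $\Amax$ and $\Amax^\prime$. For the first identity, I would start from the definition $\Bmax=\Amax\vrt_{\dom\Amax\mcap\cH^{\min}_{\mrm{A}}}\hsum(\{0\}\times\cH^\bot_{\mrm{A}})$. Since $\cH^{\min}_{\mrm{A}}=\fH_m\dsum\fK^{\min}_{\mrm{A}}$ and $\fH_m\subseteq\fH_{m-2}$ automatically, the condition $f\in\cH^{\min}_{\mrm{A}}$ for $f=f^\#+h_{m+1}(c)+k\in\dom\Amax$ forces $k\in\fK^{\min}_{\mrm{A}}=h_m(\bbC^d)$; write $k=h_m(c_k)$, so $d(k)=\eta(c_k)$. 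On this restricted domain, Lemma~\ref{lem:cHbot}(ii) gives $(\Amax^\prime-\Amax)h_m(c_k)=\wtk^{\prime\prime}\in\cH^\bot_{\mrm{A}}$, hence $\Amax^\prime\vrt_{\dom\Amax\mcap\cH^{\min}_{\mrm{A}}}$ and $\Amax\vrt_{\dom\Amax\mcap\cH^{\min}_{\mrm{A}}}$ differ only by a pair with first component $0$ and second component in $\cH^\bot_{\mrm{A}}$; absorbing this difference into the $\{0\}\times\cH^\bot_{\mrm{A}}$ summand of the componentwise sum yields the claimed equality of $\Bmax$ with $\Amax^\prime\vrt_{\dom\Amax\mcap\cH^{\min}_{\mrm{A}}}\hsum(\{0\}\times\cH^\bot_{\mrm{A}})$. (For $m=1$ everything is trivial since $\cH^{\min}_{\mrm{A}}=\cH_{\mrm{A}}$, $\cH^\bot_{\mrm{A}}=\{0\}$, and $\Amax^\prime=\Amax$.)

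For the description of $\Bmin=\Bmax^*$, I would compute the adjoint directly from the boundary-form / defining-equation machinery. Because $\Bmax$ is a componentwise sum of a domain restriction of $\Amax$ with $\{0\}\times\cH^\bot_{\mrm{A}}$, its adjoint is the intersection of $(\{0\}\times\cH^\bot_{\mrm{A}})^*=\cH^\bot_{\mrm{A}}{}^\perp\times\cH_{\mrm{A}}=\cH^{\min}_{\mrm{A}}\times\cH_{\mrm{A}}$ (using that $\cH^\bot_{\mrm{A}}$ is closed in the Pontryagin space $\cH_{\mrm{A}}$ and Lemma~\ref{lem:cHbot}(i)) with the adjoint of the domain restriction $\Amax\vrt_{\dom\Amax\mcap\cH^{\min}_{\mrm{A}}}$. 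The latter adjoint, by the same surjectivity-of-$\Gamma$ argument as in Lemma~\ref{lem:Amax}, is $\Amax^{\prime}$ with the extra pairing constraints that localize it to $\cH^{\min}_{\mrm{A}}$; intersecting with $\cH^{\min}_{\mrm{A}}\times\cH_{\mrm{A}}$ and using $\Amax^{\prime}\vrt_{\ker\Gamma}=\Amax^{\prime}\vrt_{\ker\Gamma}$ — more precisely, using parts (ii)--(iii) of Lemma~\ref{lem:Amax}, namely $\Amin=\Amax^\prime\vrt_{\ker\Gamma}$ and $\Amin^\prime=\Amax\vrt_{\ker\Gamma}$ — one identifies $\Bmin$ with $\Amin\vrt_{\dom\Amin\mcap\cH^{\min}_{\mrm{A}}}\hsum(\{0\}\times\cH^\bot_{\mrm{A}})$. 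The second equality, with $\Amin^\prime$ in place of $\Amin$, then follows by invoking Lemma~\ref{lem:cHbot}(ii) once more: on $\dom\Amin\mcap\cH^{\min}_{\mrm{A}}=\dom\Amin^\prime\mcap\cH^{\min}_{\mrm{A}}$ the operators $\Amin$ and $\Amin^\prime$ differ only by a $\cH^\bot_{\mrm{A}}$-valued term, which is again absorbed into the $\{0\}\times\cH^\bot_{\mrm{A}}$ summand.

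Finally, to show $\Bmin$ is closed and symmetric with $\Bmin^*=\Bmax$: closedness of $\Bmax$ follows because $\Amax$ is closed (Lemma~\ref{lem:Amax}), the domain restriction to the closed subspace $\cH^{\min}_{\mrm{A}}$ of a closed operator is closed, and the componentwise sum with the closed subspace $\{0\}\times\cH^\bot_{\mrm{A}}$ stays closed since $\cH^\bot_{\mrm{A}}$ is finite-dimensional; then $\Bmin=\Bmax^*$ is automatically closed. Symmetry of $\Bmin$ amounts to $\Bmin\subseteq\Bmin^*=\Bmax^{**}=\ol{\Bmax}=\Bmax$, \ie to $\Bmin\subseteq\Bmax$. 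Using the two representations just obtained, this reduces to checking $\Amin\vrt_{\dom\Amin\mcap\cH^{\min}_{\mrm{A}}}\subseteq\Amax^\prime\vrt_{\dom\Amax\mcap\cH^{\min}_{\mrm{A}}}\hsum(\{0\}\times\cH^\bot_{\mrm{A}})$; since $\Amin=\Amax^\prime\vrt_{\ker\Gamma}\subseteq\Amax^\prime$ by Lemma~\ref{lem:Amax}(ii), this inclusion is clear, and it proves $\Bmin\subseteq\Bmax$, hence $\Bmin=\Bmin^{**}=\Bmax^*{}^*$... more carefully: from $\Bmin\subseteq\Bmax$ we get $\Bmax^*\subseteq\Bmin^*$, i.e.\ $\Bmin\subseteq\Bmin^*$, which is symmetry; and then $\Bmin^*=\Bmax^{**}=\ol{\Bmax}=\Bmax$ by closedness of $\Bmax$. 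The step I expect to be the main obstacle is the precise computation of the adjoint of the domain-restricted operator $\Amax\vrt_{\dom\Amax\mcap\cH^{\min}_{\mrm{A}}}$ — one must track how restricting the domain to $\cH^{\min}_{\mrm{A}}$ enlarges the adjoint by exactly $\cH^\bot_{\mrm{A}}$ in the multivalued part and nothing more, and here the hypothesis \eqref{eq:A2} is exactly what makes the cross terms $[\cG_{\mrm{A}}]_{\sigma,m-1;\sigma^\prime m}-[\cG_{\mrm{A}}]_{\sigma m;\sigma^\prime,m-1}$ vanish so that $\Amax^\prime$ and $\Amax$ close up correctly on the minimal subspace, via Lemma~\ref{lem:cHbot}(ii).
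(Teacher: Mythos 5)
Your proposal is correct and follows essentially the same route as the paper: represent $\Bmax$ via Lemma~\ref{lem:cHbot}(ii), compute $\Bmin=\Bmax^*$ through the duality between componentwise sums and intersections of adjoints, and then identify $\Bmin^*=\Bmax$. The only (harmless) deviations are that you justify closedness of the componentwise sums by the finite dimensionality of $\cH^\bot_{\mrm{A}}$ where the paper invokes \cite[Lemma~2.10]{Hassi09}, and your symmetry argument via the explicit inclusion $\Bmin\subseteq\Bmax$ is slightly more detailed than the paper's.
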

\begin{proof}
For $m=1$ the statements of the lemma follow
from Lemma~\ref{lem:Amax}, so in what follows
we let $m\geq2$.

The representation of $\Bmax$, as stated, is due to
Lemma~\ref{lem:cHbot}.
The adjoint of $\Bmax$ is given by
(recall \eg \cite[Lemma~2.6]{Hassi09})
\[
\Bmin=
(\Amax\vrt_{\dom\Amax\mcap\cH^{\min}_{\mrm{A}}})^*\mcap
(\{0\}\times\cH^\bot_{\mrm{A}})^*
\]
with
\[
(\{0\}\times\cH^\bot_{\mrm{A}})^*=
\cH^{\min}_{\mrm{A}}\times\cH_{\mrm{A}}\,.
\]
Because $\Amax$ and $\cH^{\min}_{\mrm{A}}$ are closed,
by the same argument we also get that
\begin{align*}
(\Amax\vrt_{\dom\Amax\mcap\cH^{\min}_{\mrm{A}}})^*
=&
[\Amax\mcap(\cH^{\min}_{\mrm{A}}\times\cH_{\mrm{A}})]^*
\\
=&\ol{\Amin\hsum (\{0\}\times\cH^\bot_{\mrm{A}})}\,.
\end{align*}
Because
\[
\Amin^*\hsum (\{0\}\times\cH^\bot_{\mrm{A}})^*=
\Amax\hsum(\cH^{\min}_{\mrm{A}}\times\cH_{\mrm{A}})=
\cH^2_{\mrm{A}}
\]
is a closed linear relation, we have by
\cite[Lemma~2.10]{Hassi09} that
\[
\ol{\Amin\hsum (\{0\}\times\cH^\bot_{\mrm{A}})}=
\Amin\hsum (\{0\}\times\cH^\bot_{\mrm{A}})
\]
is also closed. Combining all together we deduce
the first representation of $\Bmin$ as stated in the lemma.
By using this representation and noting that
$\Amin\subseteq\Amax^\prime$ and
$\Amin^\prime\subseteq\Amax$ (Lemma~\ref{lem:Amax}),
we deduce also the second formula for $\Bmin$
by applying Lemma~\ref{lem:cHbot}.
The computation of the adjoint $\Bmin^*$ uses
the same arguments as that of $\Bmax^*$,
and one concludes that $\Bmin$ is a closed
symmetric linear relation.
\end{proof}
The boundary value space of $\Bmin$ is
characterized by the next theorem.
\begin{thm}\label{thm:BVSBmin}
Assume \eqref{eq:A2} if $m\geq2$, and let
$\cG^{\min}_{\mrm{A}}$ be positive definite.
Define the operator
$\Gamma^\prime:=
(\Gamma^\prime_0,\Gamma^\prime_1)\co\Bmax\lto\bbC^{2d}$
by
\[
\Gamma^\prime_0\whf:=c\,,\quad
\Gamma^\prime_1\whf:=\braket{\vp,f^\#}
-\cG^{\min}_{\mrm{A}}\chi
\]
for $\whf=(f,f^\prime)\in\Bmax$; that is
\begin{align*}
f=&f^\#+h_{m+1}(c)+h_m(\chi)\,,\quad
f^\#\in\fH_{m+2}\,,\quad
c,\chi\in\bbC^d\,,
\\
f^\prime=&L_mf^\#+z_1h_{m+1}(c)+\wtk+k_\bot\,,\quad
\wtk\in\fK_{\mrm{A}}\,,\quad
k_\bot\in\cH^\bot_{\mrm{A}}\,,
\end{align*}
\[
d(\wtk)=\fM_d \eta(\chi)+\eta(c)\,.
\]
Then $(\bbC^d,\Gamma^\prime_0,\Gamma^\prime_1)$
is a boundary triple for $\Bmax$. The corresponding
$\gamma$-field $\gamma_{\Gamma^\prime}$
and the Weyl function $M_{\Gamma^\prime}$ are
bounded analytic operator functions given by
\[
\gamma_{\Gamma^\prime}(z)\bbC^d=
\fN_z(\Bmax)=
\{(L-z)^{-1}h_m(c)+h_m(\chi)\vrt \chi=(z-\hat{\Delta})^{-1}c\,;\,
c\in\bbC^d \}\,,
\]
\[
\hat{\Delta}:=(\cG^{\min}_{\mrm{A}})^{-1}\Delta
\in[\bbC^d]\,,\quad
\Delta=(\Delta_{\sigma\sigma^\prime})=
\Delta^*\in[\bbC^d]\,,
\]
\[
\Delta_{\sigma\sigma^\prime}:=
[\cG_\fM]_{\sigma m,\sigma^\prime m}
=z_1[\cG^{\min}_{\mrm{A}}]_{\sigma\sigma^\prime}+
1_{\bbN_{\geq2}}(m)
[\cG_{\mrm{A}}]_{\sigma,m-1;\sigma^\prime m}
\]
and
\[
M_{\Gamma^\prime}(z)=q(z)+\hat{r}(z)\,,\quad
\hat{r}(z):=\cG^{\min}_{\mrm{A}}(\hat{\Delta}-z)^{-1}
\]
for $z\in\res L\mcap\res\hat{\Delta}$.
Moreover, $M_{\Gamma^\prime}$ is a uniformly strict
Nevanlinna function.
\end{thm}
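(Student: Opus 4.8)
The plan is to establish the three assertions of the theorem in turn: that $(\bbC^d,\Gamma^\prime_0,\Gamma^\prime_1)$ is an ordinary boundary triple for $\Bmax=\Bmin^*$, that the associated $\gamma$-field and Weyl function have the stated form, and that $M_{\Gamma^\prime}$ is uniformly strict. For $m=1$ one has $\cH^{\min}_{\mrm{A}}=\cH_{\mrm{A}}$, $\cH^\bot_{\mrm{A}}=\{0\}$, $\Bmax=\Amax$, $\Bmin=\Amin$, $\cG^{\min}_{\mrm{A}}=\cG_{\mrm{A}}$, $\hat{\Delta}=z_1$, $\hat{r}=r$, and $\Gamma^\prime=\Gamma$, so the theorem reduces to the material recalled in Section~2; assume therefore $m\geq2$. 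Since $\Bmin$ is closed and symmetric with $\Bmin^*=\Bmax$ (Lemma~\ref{lem:Bmax}), in order to prove that $(\bbC^d,\Gamma^\prime_0,\Gamma^\prime_1)$ is a boundary triple it suffices to verify the abstract Green identity for $\Gamma^\prime$ on $\Bmax$ and the surjectivity of $(\Gamma^\prime_0,\Gamma^\prime_1)\co\Bmax\lto\bbC^{2d}$; then $\ker\Gamma^\prime=\Bmin$ follows automatically. Surjectivity is immediate: given $(a,b)\in\bbC^{2d}$, take $f^\#=0$, $c=a$, $\chi=-(\cG^{\min}_{\mrm{A}})^{-1}b$, $k_\bot=0$ -- which is legitimate precisely because $\cG^{\min}_{\mrm{A}}$ is positive definite, hence invertible -- so that the corresponding $\whf\in\Bmax$ satisfies $\Gamma^\prime\whf=(a,b)$; one also notes that the decomposition $f=f^\#+h_{m+1}(c)+h_m(\chi)$ is unique, so that $\Gamma^\prime$ is well defined.

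The key point in the Green identity is that assumption \eqref{eq:A2} cancels the indefinite ``anomalous'' term in the boundary form. Apply the boundary-form identity for $\Amax$ from Section~2 to $f,g\in\dom\Amax\mcap\cH^{\min}_{\mrm{A}}$: then $k=h_m(\chi)$, $k^\prime=h_m(\chi^\prime)$, so $d(k)=\eta(\chi)$, $d(k^\prime)=\eta(\chi^\prime)$, and the anomalous term $\braket{\eta(\chi),(\cG_\fM-\cG^*_\fM)\eta(\chi^\prime)}_{\bbC^{md}}$ equals $-\braket{\chi,[\cG_{\mrm{A}}d(\wtk^{\prime\prime})]_m}_{\bbC^d}$, where $\wtk^{\prime\prime}$ is built from $\chi^\prime$ exactly as in the proof of Lemma~\ref{lem:cHbot}(ii); by that computation the block $[\cG_{\mrm{A}}d(\wtk^{\prime\prime})]_m$ vanishes precisely under \eqref{eq:A2}, so the anomalous term drops. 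Moreover $[\cG_{\mrm{A}}d(h_m(\chi))]_m=[\cG_{\mrm{A}}\eta(\chi)]_m=\cG^{\min}_{\mrm{A}}\chi$, so the maps $\Gamma_0,\Gamma_1$ of Section~2 restrict on $\cH^{\min}_{\mrm{A}}$ to $\Gamma^\prime_0,\Gamma^\prime_1$; hence $[f,\Amax g]_{\mrm{A}}-[\Amax f,g]_{\mrm{A}}=\braket{\Gamma^\prime_0\whf,\Gamma^\prime_1\whg}_{\bbC^d}-\braket{\Gamma^\prime_1\whf,\Gamma^\prime_0\whg}_{\bbC^d}$ for $f,g\in\dom\Amax\mcap\cH^{\min}_{\mrm{A}}$. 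Passing to $\Bmax$ costs nothing: for $(f,f^\prime),(g,g^\prime)\in\Bmax$ write $f^\prime,g^\prime$ as their $\Amax$-values plus components $k_\bot,k^\prime_\bot\in\cH^\bot_{\mrm{A}}$; since $f,g\in\cH^{\min}_{\mrm{A}}$ we have $[f,k^\prime_\bot]_{\mrm{A}}=[k_\bot,g]_{\mrm{A}}=0$, so the boundary form is unchanged. This is the Green identity, so $(\bbC^d,\Gamma^\prime_0,\Gamma^\prime_1)$ is a boundary triple for $\Bmax$.

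For the $\gamma$-field one computes $\fN_z(\Bmax)=\{(f,zf)\in\Bmax\}$ by solving $(f,zf)\in\Bmax$ componentwise in $\fH_m\dsum\fK_{\mrm{A}}$ with $f=f^\#+h_{m+1}(c)+h_m(\chi)$: the $\fH_m$-component gives $(L_m-z)f^\#=(z-z_1)h_{m+1}(c)$, \ie $f^\#=(z-z_1)(L-z)^{-1}h_{m+1}(c)$, so by the resolvent identity $f^\#+h_{m+1}(c)=(L-z)^{-1}h_m(c)$; the $\fK_{\mrm{A}}$-component forces $k_\bot=zh_m(\chi)-\wtk$ with $d(\wtk)=\fM_d\eta(\chi)+\eta(c)$, and imposing $k_\bot\in\cH^\bot_{\mrm{A}}$ via the condition $[\cG_{\mrm{A}}d(k_\bot)]_m=0$ of Lemma~\ref{lem:cHbot}(i) produces, after inserting the definition of $\Delta$ and using \eqref{eq:A2}, the collapsed identity $\cG^{\min}_{\mrm{A}}(z-\hat{\Delta})\chi=\cG^{\min}_{\mrm{A}}c$, \ie $\chi=(z-\hat{\Delta})^{-1}c$ for $z\in\res\hat{\Delta}$. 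This gives $\fN_z(\Bmax)$ as stated; since $c\mapsto\chi$ is determined and $c\mapsto f$ is injective, $\dim\fN_z(\Bmax)=d$ and $\Gamma^\prime_0$ is a bijection of $\fN_z(\Bmax)$ onto $\bbC^d$, whence $\gamma_{\Gamma^\prime}(z)c=(L-z)^{-1}h_m(c)+h_m((z-\hat{\Delta})^{-1}c)$. The Weyl function is then read off from $M_{\Gamma^\prime}(z)c=\Gamma^\prime_1(\gamma_{\Gamma^\prime}(z)c,z\gamma_{\Gamma^\prime}(z)c)=\braket{\vp,f^\#}-\cG^{\min}_{\mrm{A}}\chi$: by the definition of the Krein $Q$-function, $\braket{\vp,f^\#}=(z-z_1)\braket{\vp,(L-z)^{-1}h_{m+1}(c)}=q(z)c$, while $-\cG^{\min}_{\mrm{A}}\chi=\cG^{\min}_{\mrm{A}}(\hat{\Delta}-z)^{-1}c=\hat{r}(z)c$, so $M_{\Gamma^\prime}=q+\hat{r}$; analyticity and boundedness of $\gamma_{\Gamma^\prime}$ and $M_{\Gamma^\prime}$ on $\res L\mcap\res\hat{\Delta}$ are evident from these closed forms.

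It remains to prove uniform strictness. Here $\fN_z(\Bmax)\subseteq\cH^{\min}_{\mrm{A}}$, which by $\cG^{\min}_{\mrm{A}}>0$ is a Hilbert space; the Green identity gives, in the usual way, $(z-\ol{z})^{-1}(M_{\Gamma^\prime}(z)-M_{\Gamma^\prime}(z)^*)=\gamma_{\Gamma^\prime}(z)^*\gamma_{\Gamma^\prime}(z)$ with the adjoint taken in $\cH^{\min}_{\mrm{A}}$, and this is positive definite since $\gamma_{\Gamma^\prime}(z)$ is a bijection onto $\fN_z(\Bmax)$. As $\res L\mcap\res\hat{\Delta}\supseteq\bbC\setm\bbR$ (because $\hat{\Delta}=(\cG^{\min}_{\mrm{A}})^{-1}\Delta$ is self-adjoint with respect to $\braket{\cdot,\cG^{\min}_{\mrm{A}}\cdot}_{\bbC^d}$, using $\Delta=\Delta^*$ and $\cG^{\min}_{\mrm{A}}>0$), $M_{\Gamma^\prime}$ is holomorphic on $\bbC\setm\bbR$ with $M_{\Gamma^\prime}(\ol{z})=M_{\Gamma^\prime}(z)^*$ and positive-definite imaginary part for $z$ in the open upper half-plane; that is, $M_{\Gamma^\prime}$ is a uniformly strict Nevanlinna function. (The same conclusion is visible from $M_{\Gamma^\prime}=q+\hat{r}$: $q$ is a Nevanlinna function, being the Krein $Q$-function of the closed symmetric operator $\Lmin$ in the Hilbert space $\fH_m$, while a one-line resolvent computation gives $(z-\ol{z})^{-1}(\hat{r}(z)-\hat{r}(z)^*)=\bigl((\hat{\Delta}-\ol{z})^{-1}\bigr)^*\cG^{\min}_{\mrm{A}}(\hat{\Delta}-\ol{z})^{-1}>0$.) The step I expect to be the main obstacle is the componentwise analysis of $\fN_z(\Bmax)$ above -- tracking the $\fH_m$- and $\fK_{\mrm{A}}$-parts of $f$ and $zf$, imposing the membership condition for $\cH^\bot_{\mrm{A}}$, and recognising that the resulting $d\times d$ linear system collapses, thanks to the definition of $\Delta$ together with \eqref{eq:A2}, to $(z-\hat{\Delta})\chi=c$; the bookkeeping identifying $\Gamma^\prime$ with the Section-2 maps on $\cH^{\min}_{\mrm{A}}$ is of the same flavour.
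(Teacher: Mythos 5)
Your proof is correct, and its Steps for the $\gamma$-field, the Weyl function and the uniform strictness coincide in substance with the paper's: the componentwise analysis of $(f,zf)\in\Bmax$ leading to $f^\#+h_{m+1}(c)=(L-z)^{-1}h_m(c)$ and, via $[\cG_{\mrm{A}}d(k_\bot)]_m=0$, to $\cG^{\min}_{\mrm{A}}(z-\hat{\Delta})\chi=\cG^{\min}_{\mrm{A}}c$; the identification $M_{\Gamma^\prime}=q+\hat{r}$; and the strictness argument combining $\Im\hat{r}(z)/\Im z>0$ with the reality of $\spec\hat{\Delta}$ (your observation that $\hat{\Delta}$ is self-adjoint in $(\bbC^d,\braket{\cdot,\cG^{\min}_{\mrm{A}}\cdot})$ is the same similarity argument the paper uses). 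Where you genuinely diverge is in establishing the boundary-triple property. The paper computes the Krein-space adjoint $(\Gamma^\prime)^{[*]}$ by relation calculus, writing $\Gamma^\prime=(\Gamma\mcap\fM)\hsum\fN$ and invoking Lemma~\ref{lem:Amax}(i) together with Lemma~\ref{lem:cHbot}(ii) to conclude $(\Gamma^\prime)^{[*]}=(\Gamma^\prime)^{-1}$, so that $\Gamma^\prime$ is a single-valued unitary relation with closed domain and hence surjective. You instead verify the abstract Green identity directly on $\Bmax$ — noting that for arguments in $\dom\Amax\mcap\cH^{\min}_{\mrm{A}}$ the anomalous term $\braket{d(k),(\cG_\fM-\cG^*_\fM)d(k^\prime)}_{\bbC^{md}}$ reduces to exactly the $m$-block computed in the proof of Lemma~\ref{lem:cHbot}(ii) and therefore vanishes under \eqref{eq:A2}, and that the $\cH^\bot_{\mrm{A}}$-components drop out of the form — and then exhibit surjectivity by hand. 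Both routes hinge on the same cancellation; yours is more elementary and makes the role of \eqref{eq:A2} transparent, while the paper's yields unitarity of $\Gamma^\prime$ as a relation directly, which is what its cited boundary-relation machinery takes as input. Two points worth tightening: (a) the inference \enquote{Green identity plus surjectivity implies $\ker\Gamma^\prime=\Bmin$} uses $\Bmax^*=\Bmin$ from Lemma~\ref{lem:Bmax} together with the double-adjoint identity for closed relations, which still holds in the Pontryagin space $\cH_{\mrm{A}}$ but deserves a word since the inner product is indefinite; (b) in the strictness argument the adjoint in $(z-\ol{z})^{-1}(M_{\Gamma^\prime}(z)-M_{\Gamma^\prime}(z)^*)=\gamma_{\Gamma^\prime}(z)^{[*]}\gamma_{\Gamma^\prime}(z)$ is the $\cH_{\mrm{A}}$-adjoint; positivity then follows because $\ran\gamma_{\Gamma^\prime}(z)\subseteq\cH^{\min}_{\mrm{A}}$ and the restriction of $[\cdot,\cdot]_{\mrm{A}}$ there is positive definite by $\cG^{\min}_{\mrm{A}}>0$ — which is what you intend by \enquote{adjoint taken in $\cH^{\min}_{\mrm{A}}$}, but it should be phrased that way.
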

\begin{proof}
Step 1.
In this step we argue as in the proof of Lemma~\ref{lem:Bmax}.
Consider $\Gamma^\prime$ as a single-valued
linear relation with $\dom\Gamma^\prime=\Bmax$:
\[
\Gamma^\prime=\{(\whf,(\Gamma^\prime_0\whf,
\Gamma^\prime_1\whf))\vrt\whf\in\Bmax\}\,.
\]
Likewise,
consider $\Gamma$ as a single-valued
linear relation with $\dom\Gamma=\Amax$:
\[
\Gamma=\{(\whf,(\Gamma_0\whf,
\Gamma_1\whf))\vrt\whf\in\Amax\}\,.
\]
Then by definition
\[
\Gamma^\prime=(\Gamma\mcap\fM)\hsum\fN\,,\quad
\fM:=(\cH^{\min}_{\mrm{A}}\times\cH_{\mrm{A}})
\times\bbC^{2d}\,,\quad
\fN:=(\{0\}\times\cH^\bot_{\mrm{A}})\times\{0\}\,.
\]
Then the Krein space adjoint of $\Gamma^\prime$ is given by
\[
(\Gamma^\prime)^{[*]}=(\Gamma\mcap\fM)^{[*]}\mcap
\fN^{[*]}\,,\quad
\fN^{[*]}=\bbC^{2d}\times
(\cH^{\min}_{\mrm{A}}\times\cH_{\mrm{A}})=\fM^{-1}\,.
\]
Because $\fM$ is a closed linear relation,
and so is $\Gamma$ by Lemma~\ref{lem:Amax}(i),
the Krein space adjoint of $\Gamma\mcap\fM$
is given by
\[
(\Gamma\mcap\fM)^{[*]}=\ol{\Gamma^{[*]}\hsum\fM^{[*]}}\,,
\quad
\fM^{[*]}=\{0\}\times(\{0\}\times\cH^\bot_{\mrm{A}})
=\fN^{-1}\,.
\]
Because $\Gamma\hsum\fM=\cH^2_{\mrm{A}}\times\bbC^{2d}$,
it follows that
\[
(\Gamma\mcap\fM)^{[*]}=\Gamma^{[*]}\hsum
\fN^{-1}=[(\Gamma^{[*]})^{-1}\hsum\fN]^{-1}
\]
and therefore
\[
(\Gamma^\prime)^{[*]}=
[(\Gamma^{[*]})^{-1}\hsum\fN]^{-1}\mcap\fM^{-1}=
\{[(\Gamma^{[*]})^{-1}\mcap\fM]\hsum\fN\}^{-1}\,.
\]
By applying Lemma~\ref{lem:Amax}(i) and
Lemma~\ref{lem:cHbot}(ii), this leads to
$(\Gamma^\prime)^{[*]}=(\Gamma^\prime)^{-1}$.

Since $\Gamma^\prime$ is single-valued, unitary, and
with closed domain, we conclude that
$\Gamma^\prime$ is surjective, and then the triple
$(\bbC^d,\Gamma^\prime_0,\Gamma^\prime_1)$
is a boundary triple for $\Bmax$.

Step 2.
We compute the eigenspace of $\Bmax$.
For $f\in\fN_z(\Bmax)$, $z\in\bbC$, we have
\[
0=(L-z)f^\#+(z_1-z)h_{m+1}(c)\,,\quad
0=\wtk+k_\bot-zh_m(\chi)\,.
\]
Then, for $z\in\res L$, the first equation leads to
\[
f^\#=(z-z_1)(L-z)^{-1}h_{m+1}(c)=
-h_{m+1}(c)+(L-z)^{-1}h_m(c)
\]
The second equation implies that
\[
0=d(\wtk)+d(k_\bot)-z\eta(\chi)\quad\text{or else}\quad
d(k_\bot)=(z-\fM_d)\eta(\chi)-\eta(c)\,.
\]
Because $k_\bot\in\cH^\bot_{\mrm{A}}$, we have that
$[\cG_{\mrm{A}}d(k_\bot)]_m=0$; hence
\[
0=\cG^{\min}_{\mrm{A}}(z\chi-c)-
[\cG_{\fM}\eta(\chi)]_m\,,\quad
[\cG_{\fM}\eta(\chi)]_m=\Delta\chi\,.
\]
Because by hypothesis an Hermitian $\cG^{\min}_{\mrm{A}}$ is
positive definite, the latter shows that
\[
0=(z-\hat{\Delta})\chi-c\quad\Rightarrow\quad
\chi=(z-\hat{\Delta})^{-1}c\,,\quad z\in\res\hat{\Delta}\,.
\]

Step 3.
By definition $\gamma_{\Gamma^\prime}(z)c=f\in\fN_z(\Bmax)$;
thus by Step 2, we get $\gamma_{\Gamma^\prime}(z)$
as claimed. Again by definition
$M_{\Gamma^\prime}(z)c=\Gamma^\prime_1(f,zf)$,
$f\in\fN_z(\Bmax)$;
thus by Step 2, we get $M_{\Gamma^\prime}(z)$
as stated in the lemma.

Step 4.
Because $q$ is the Weyl function corresponding
to the boundary triple
$(\bbC^d,\mathring{\Gamma}_0,\mathring{\Gamma}_1)$
for the adjoint in $\fH_m$ of $\Lmin$, where
(\cite[Corollary~7.4]{Jursenas19})
\[
\mathring{\Gamma}_{0}(f^\#+h_{m+1}(c)):=c\,,\quad
\mathring{\Gamma}_{1}(f^\#+h_{m+1}(c)):=
\braket{\vp,f^\#}\,,
\]
we have by \eg \cite[Theorem~1.4]{Derkach17}
that $q$ is a uniformly strict Nevanlinna function.

By hypothesis imposed on $\cG_{\mrm{A}}$, the matrix
$\Delta$ is Hermitian, so the matrix function $\hat{r}$
is symmetric with respect to the real axis,
$\hat{r}(z)^*=\hat{r}(\ol{z})$, $z\in\res\hat{\Delta}$.
We prove that $\res\hat{\Delta}\supseteq\bbC\setm\bbR$.
Because $\hat{r}$ is analytic on $\res\hat{\Delta}$,
and moreover the matrix
\[
\frac{\Im\hat{r}(z)}{\Im z}=AB(z)\,,
\quad \Im z\neq0\,,
\]
\[
A:=(\cG^{\min}_{\mrm{A}})^{-2}>0\,,\quad
B(z):=\hat{r}(z)^*(\cG^{\min}_{\mrm{A}})^{-1}
\hat{r}(z)>0
\]
is similar to the positive definite matrix
$B(z)^{1/2}AB(z)^{1/2}$,
this would imply that $\hat{r}$ is a uniformly
strict Nevanlinna function.

The spectrum of $\hat{\Delta}$ consists of
$z\in\bbC$ such that the determinant
$\det(\hat{\Delta}-z)=0$. Because $\hat{\Delta}$
is the product of two Hermitian matrices, using
their spectral decompositions we get that $z$
solves $\det(Y-z)=0$, where the matrix
$Y:=\Lambda^{-1}X$, $\Lambda$ is the positive
definite diagonal matrix with the eigenvalues of
$\cG^{\min}_{\mrm{A}}$ on its diagonal,
and $X$ is an Hermitian matrix. Because
$Y=\Lambda^{-1/2}Y^\prime\Lambda^{1/2}$ is similar
to an Hermitian matrix
$Y^\prime:=\Lambda^{-1/2}X\Lambda^{-1/2}$, we get that
$z$ is an eigenvalue of $Y^\prime$, and hence belongs to $\bbR$.
Consequently, $\res\hat{\Delta}\supseteq\bbC\setm\bbR$
as claimed.

The sum $M_{\Gamma^\prime}$ of two uniformly strict
Nevanlinna functions $q$ and $\hat{r}$ is itself of
the same class, as can be deduced from
\cite[Lemma~2.6]{Behrndt13}
\cite[Proposition~3.2]{Daho85},
and this accomplishes the proof of the theorem.
\end{proof}
Under assumptions of Theorem~\ref{thm:BVSBmin},
consider $\Gamma^\prime$ as a (unitary) single-valued
linear relation with $\dom\Gamma^\prime=\Bmax$.
According to \cite[Theorem~4.8]{Behrndt11}, if $\Gamma^\prime$
is minimal, \ie if the closed linear span
\[
\cH_s:=\ol{\spn}\{\fN_z(\Bmax)\vrt z\in\reg\Bmin\}
\]
($\reg\Bmin$ is the regularity domain of $\Bmin$;
see \eg \cite[Eq.~(6.14)]{Azizov89}) coincides with
$\cH_{\mrm{A}}$, then $M_{\Gamma^\prime}$ must be
a generalized Nevanlinna function with a generally
nontrivial number $\kappa$ of negative squares
(where $\kappa$ is equal to the rank
of indefiniteness of the Pontryagin space $\cH_{\mrm{A}}$).
Recall that $\cH_s=\cH_{\mrm{A}}$ means also that
a closed symmetric linear relation $\Bmin$ is simple.
If, however, $\Gamma^\prime$ is not minimal,
then $M_{\Gamma^\prime}$ is a generalized Nevanlinna
function with $\kappa^\prime\leq\kappa$ negative squares.
By Theorem~\ref{thm:BVSBmin} we have $\kappa^\prime=0$,
and by the next proposition this corresponds to the fact that
$\Gamma^\prime$ is not a minimal boundary relation for $\Bmax$
for at least $m\geq2$, unless $\cH^\bot_{\mrm{A}}=\{0\}$;
if the latter holds then by our hypothesis on $\cG_{\mrm{A}}$
the space $\cH_{\mrm{A}}=\cH^{\min}_{\mrm{A}}$ is a Hilbert
space (for all $m\geq1$), and hence $\kappa=0$.
\begin{thm}
Under assumptions of Theorem~\ref{thm:BVSBmin},
$\emptyset\neq\cH_s\subseteq\cH^{\min}_{\mrm{A}}$.
Moreover, if the only solutions $f\in\fH_m$ and $\chi\in\bbC^d$
to
\begin{equation}
(\forall z\in\bbC\setm\bbR)\;
\braket{\vp,(L-z)^{-1}f}=
\hat{r}(z)\chi
\label{eq:A3}
\end{equation}
are $f=0$ and $\chi=0$,
then $\cH_s=\cH^{\min}_{\mrm{A}}$.
\end{thm}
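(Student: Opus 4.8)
The plan is to establish the two assertions of the theorem separately, using the explicit description of $\fN_z(\Bmax)$ from Theorem~\ref{thm:BVSBmin} together with the structure of $\cH^{\min}_{\mrm{A}}$ and $\cH^{\bot}_{\mrm{A}}$ from Lemma~\ref{lem:cHbot}. First I would prove the inclusion $\cH_s\subseteq\cH^{\min}_{\mrm{A}}$. By Theorem~\ref{thm:BVSBmin}, every element of $\fN_z(\Bmax)$ has the form $(L-z)^{-1}h_m(c)+h_m(\chi)$ with $\chi=(z-\hat{\Delta})^{-1}c$; the second summand lies in $\fK^{\min}_{\mrm{A}}\subseteq\cH^{\min}_{\mrm{A}}$ by definition, while the first lies in $\fH_m$ because $h_m(c)\in\fK^{\min}_{\mrm{A}}\subseteq\fH_{m-2}$, so $(L-z)^{-1}h_m(c)\in\fH_m$ and $\fH_m\subseteq\cH^{\min}_{\mrm{A}}$. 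Hence $\fN_z(\Bmax)\subseteq\cH^{\min}_{\mrm{A}}$ for every $z\in\reg\Bmin$, and since $\cH^{\min}_{\mrm{A}}$ is closed, the closed span $\cH_s$ is contained in it. Nonemptiness is immediate since $\reg\Bmin\supseteq\res A_0\cap\dots\neq\emptyset$ (or simply because $\reg\Bmin\supseteq\bbC\setm\bbR$ by the Nevanlinna property of $M_{\Gamma^\prime}$), so $\fN_z(\Bmax)\neq\{0\}$ for such $z$.

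For the converse inclusion under the hypothesis on \eqref{eq:A3}, I would argue by orthogonality: it suffices to show that any $g\in\cH^{\min}_{\mrm{A}}$ with $[g,\fN_z(\Bmax)]_{\mrm{A}}=0$ for all $z\in\bbC\setm\bbR$ must vanish. Write $g=f+h_m(\chi)$ with $f\in\fH_m$, $\chi\in\bbC^d$. Using the metric $[\,\cdot\,,\,\cdot\,]_{\mrm{A}}$ on $\cH^{\min}_{\mrm{A}}$ and the explicit form of the defect elements, the orthogonality condition $[g,(L-\bar z)^{-1}h_m(c)+h_m((\bar z-\hat\Delta)^{-1}c)]_{\mrm{A}}=0$ for all $c\in\bbC^d$ unpacks into an identity involving $\braket{f,(L-\bar z)^{-1}h_m(c)}_m$ and $\braket{\chi,\cG^{\min}_{\mrm{A}}(\bar z-\hat\Delta)^{-1}c}_{\bbC^d}$. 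The first pairing I would rewrite via the duality pairing and the definition of $h_m$ as something proportional to $\braket{\vp,(L-z)^{-1}\tilde f}$ for an appropriate $\tilde f\in\fH_m$ (using $h_{\sigma m}=b_m(L)^{-1}\vp_\sigma$ and moving powers of $(L-z_1)$ across), while the second is exactly $\hat{r}(z)$-type since $\hat r(z)=\cG^{\min}_{\mrm{A}}(\hat\Delta-z)^{-1}$ and $\hat\Delta=(\cG^{\min}_{\mrm{A}})^{-1}\Delta$; after taking adjoints and using $\Delta=\Delta^*$, the vanishing for all $c$ becomes precisely an equation of the form \eqref{eq:A3} for the pair $(\tilde f,\tilde\chi)$ built from $(f,\chi)$. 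The hypothesis then forces $\tilde f=0$ and $\tilde\chi=0$, which by the invertibility of the transformation (powers of $(L-z_1)$ are bijective on the scale, and $\cG^{\min}_{\mrm{A}}$ is positive definite) gives $f=0$ and $\chi=0$, i.e.\ $g=0$. Hence $\cH^{\min}_{\mrm{A}}\ominus\cH_s=\{0\}$, and combined with the first part $\cH_s=\cH^{\min}_{\mrm{A}}$.

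The main obstacle I anticipate is the bookkeeping in the second part: correctly identifying the linear isomorphism between the "test pair" $(f,\chi)$ appearing in the orthogonality relation and the pair in \eqref{eq:A3}, i.e.\ verifying that $[f+h_m(\chi),\,\fN_z(\Bmax)]_{\mrm{A}}=0$ for all $z$ is genuinely equivalent to \eqref{eq:A3} (up to an explicit invertible change of variables) rather than merely implied by it. This requires carefully tracking how $(L-z)^{-1}$ acts on $h_m(c)=b_m(L)^{-1}\vp(c)$, how the index shift from $m$ to $m-1$ in $\Delta$ enters, and confirming that no information is lost when restricting the defect parameter $c$ — but since $\gamma_{\Gamma^\prime}(\bar z)$ ranges over a full copy of $\bbC^d$ and $\cG^{\min}_{\mrm{A}}>0$, the equivalence should go through cleanly. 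A secondary point to check is that $\reg\Bmin\supseteq\bbC\setm\bbR$ so that evaluating the span over $z\in\bbC\setm\bbR$ in \eqref{eq:A3} indeed computes $\cH_s$; this follows from $M_{\Gamma^\prime}$ being a (uniformly strict) Nevanlinna function, as established in Theorem~\ref{thm:BVSBmin}.
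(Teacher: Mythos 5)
Your proposal is correct and follows essentially the same route as the paper: the paper also identifies the orthogonal complement of $\ol{\spn}\{\fN_z(\Bmax)\vrt z\in\bbC\setm\bbR\}$ with $X[\dsum]\cH^\bot_{\mrm{A}}$, where $X$ is exactly the solution set of \eqref{eq:A3} (and indeed the \enquote{change of variables} you worry about is the identity: the pairing computation gives $\braket{f,(L-\ol{z})^{-1}h_m(c)}_m=\braket{\braket{\vp,(L-z)^{-1}f},c}_{\bbC^d}$ and $[\cG^{\min}_{\mrm{A}}(\ol{z}-\hat{\Delta})^{-1}]^*=-\hat{r}(z)$, so orthogonality of $f+h_m(\chi)$ to all defect elements is literally \eqref{eq:A3} for the same pair $(f,\chi)$). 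The one piece you defer rather than prove is that $\reg\Bmin\supseteq\res L\mcap\res\hat{\Delta}\supseteq\bbC\setm\bbR$ -- your appeal to \enquote{the Nevanlinna property of $M_{\Gamma^\prime}$} is not by itself a proof in this Pontryagin-space setting, and the paper establishes it by a separate lemma showing $\fN_z(\Bmin)=\{0\}$ and that $\ran(\Bmin-z)$ is closed for such $z$ (alternatively one can use $z\in\res B_0$), so that step needs to be written out.
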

\begin{proof}
First we prove the next lemma.
\begin{lem}\label{lem:BVSBmin-1}
$(\forall k\in\fK_{\mrm{A}})$
$(\exists\chi\in\bbC^d)$ $(\exists k_\bot\in\cH^\bot_{\mrm{A}})$
$d(k)=\eta(\chi)+d(k_\bot)$.
\end{lem}
\begin{proof}
Because every $f\in\cH_{\mrm{A}}$ is of the form
$f=f^\prime+k_\bot$, for some $f^\prime\in\cH^{\min}_{\mrm{A}}$
and $k_\bot\in\cH^\bot_{\mrm{A}}$,
we have that $f^\prime=f^{\prime\prime}+h_m(\chi)$, for
some $f^{\prime\prime}\in\fH_m$ and $\chi\in\bbC^d$.
Choosing $f^{\prime\prime}=0$ the claim follows.
\end{proof}
That $\cH_s$ is nonempty follows from the
following lemma (recall that
$\res L\mcap\res\hat{\Delta}\supseteq\bbC\setm\bbR$).
\begin{lem}\label{lem:regBmin}
$\reg\Bmin\supseteq\res L\mcap\res\hat{\Delta}$.
\end{lem}
\begin{proof}
We show that, for $z\in\res L\mcap\res\hat{\Delta}$,
the eigenspace
$\fN_z(\Bmin)=\{0\}$ and the range
$\ran(\Bmin-z)$ is closed, from which the statement of
the lemma follows.

The linear relation $\Bmin=\ker\Gamma^\prime$
explicitly reads
\begin{align*}
\Bmin=&\{(f^\#+h_m(\chi),L_mf^\#+\wtk+k_\bot)\vrt
f^\#\in\fH_{m+2}\,;\chi\in\bbC^d\,;
\\
&k_\bot\in\cH^\bot_{\mrm{A}}\,;\,\wtk\in\fK_{\mrm{A}}\,;\,
d(\wtk)=\fM_d\eta(\chi)\,;\,
\braket{\vp,f^\#}=\cG^{\min}_{\mrm{A}}\chi\}\,.
\end{align*}
Therefore $f\in\fN_z(\Bmin)$ solves
\[
0=(L_m-z)f^\#\,,\quad
0=(\hat{\Delta}-z)\chi\,,\quad
\braket{\vp,f^\#}=\cG^{\min}_{\mrm{A}}\chi\,.
\]
Since $z\in\res L_m=\res L$, this leads to $f=0$.

By applying Lemma~\ref{lem:BVSBmin-1}
\[
\wtk=h_m(\hat{\Delta}\chi)+k^\prime_\bot\,,\quad
k^\prime_\bot\in\cH^\bot_{\mrm{A}}\,.
\]
Therefore the range
\begin{align*}
\ran(\Bmin-z)=&
\{(L_m-z)f^\#+h_m((\hat{\Delta}-z)\chi)
+k_\bot\vrt f^\#\in\fH_{m+2}\,;\,
\chi\in\bbC^d\,;
\\
&k_\bot\in\cH^\bot_{\mrm{A}}\,;\,
\braket{\vp,f^\#}=\cG^{\min}_{\mrm{A}}\chi \}
\\
&(z\in\bbC)
\\
=&
\{(L_m-z)f^\#+h_m(\chi)+k_\bot\vrt f^\#\in\fH_{m+2}\,;\,
\chi\in\bbC^d\,;
\\
&k_\bot\in\cH^\bot_{\mrm{A}}\,;\,
\braket{\vp,f^\#}=\hat{r}(z)\chi \}
\\
&(z\in\res\hat{\Delta})\,.
\end{align*}

On the other hand, the closure
$\ol{\ran}(\Bmin-z)$, $\ol{z}\in\res L\mcap\res\hat{\Delta}$,
is the orthogonal complement in $\cH_{\mrm{A}}$
of $\fN_{\ol{z}}(\Bmax)$; hence
\begin{align*}
\ol{\ran}(\Bmin-z)=&
\{f+k\in\fH_m\dsum\fK_{\mrm{A}}\vrt
(\forall c\in\bbC^d)\,
\\
&0=\braket{f,(L-\ol{z})^{-1}h_m(c)}_m+
\braket{d(k),\cG_{\mrm{A}}\eta(\chi)}_{\bbC^{md}}\,;
\\
&\chi=(\ol{z}-\hat{\Delta})^{-1}c \}\,.
\end{align*}
Note that $\res L\mcap\res\hat{\Delta}\supseteq\bbC\setm\bbR$
implies that also $z\in\res L\mcap\res\hat{\Delta}$.

We have
\[
\braket{f,(L-\ol{z})^{-1}h_m(c)}_m=
\braket{\braket{\vp,(L-z)^{-1}f},c }_{\bbC^d}\,,
\]
\begin{align*}
\braket{d(k),\cG_{\mrm{A}}\eta(\chi)}_{\bbC^{md}}=&
\braket{(z-\hat{\Delta}^*)^{-1}[\cG_{\mrm{A}}d(k)]_m,
c}_{\bbC^d}
\\
=&-\braket{\hat{r}(z)(\cG^{\min}_{\mrm{A}})^{-1}
[\cG_{\mrm{A}}d(k)]_m,c}_{\bbC^d}\,.
\end{align*}
Putting $f^\#:=(L-z)^{-1}f\in\fH_{m+2}$
and applying Lemma~\ref{lem:BVSBmin-1}, \ie
\begin{align*}
d(k)=&\eta(\chi^\prime)+k^{\prime\prime}_\bot\,,
\quad k^{\prime\prime}_\bot\in\cH^\bot_{\mrm{A}}\,,
\quad \chi^\prime:=(\cG^{\min}_{\mrm{A}})^{-1}
[\cG_{\mrm{A}}d(k)]_m
\\
\Rightarrow&[\cG_{\mrm{A}}d(k)]_m=\cG^{\min}_{\mrm{A}}\chi^\prime\,,
\end{align*}
we deduce that
\begin{align*}
\ol{\ran}(\Bmin-z)=&
\{(L_m-z)f^\#+h_m(\chi)+k_\bot\vrt f^\#\in\fH_{m+2}\,;\,
\chi\in\bbC^d\,;
\\
&k_\bot\in\cH^\bot_{\mrm{A}}\,;\,
\braket{\vp,f^\#}=\hat{r}(z)\chi \}
=\ran(\Bmin-z)
\end{align*}
for $z\in\res L\mcap\res\hat{\Delta}$.
We remark that the functional
\[
\Phi(\cdot):=(\cG^{\min}_{\mrm{A}})^{-1}
[\cG_{\mrm{A}}d(\cdot)]_m\co\fK_{\mrm{A}}\lto\bbC^d
\]
is surjective, and that therefore $\chi^\prime=\Phi(k)$
ranges over all $\bbC^d$ whenever $k$ ranges over all
$\fK_{\mrm{A}}$.
This accomplishes the proof of the lemma.
\end{proof}
Because $\fN_z(\Bmax)\subseteq\cH^{\min}_{\mrm{A}}$,
$z\in\bbC$, and because
$\bbC\setm\bbR\subseteq\res L\mcap\res\hat{\Delta}$,
it follows that
\[
\mathring{\cH}_s:=\ol{\spn}\{\fN_z(\Bmax)\vrt z\in\bbC\setm\bbR\}
\subseteq\cH_s
\subseteq\cH^{\min}_{\mrm{A}}\,.
\]
By the proof of Lemma~\ref{lem:regBmin},
the orthogonal complement $\mathring{\cH}^\bot_s$
in $\cH_{\mrm{A}}$ of $\mathring{\cH}_s$ is given by
\[
\mathring{\cH}^\bot_s=
\bigcap_{z\in\bbC\setm\bbR}\ran(\Bmin-z)=
X[\dsum]\cH^{\bot}_{\mrm{A}}
\]
where the subset $X\subseteq\cH^{\min}_{\mrm{A}}$
is defined by
\[
X:=\{f+h_m(\chi)\in\fH_m\dsum\fK^{\min}_{\mrm{A}}\vrt
(\forall z\in\bbC\setm\bbR)\,
\braket{\vp,(L-z)^{-1}f}=
\hat{r}(z)\chi \}
\]
and $[\dsum]$ indicates the direct sum which is
orthogonal with respect to the $\cH_{\mrm{A}}$-metric
$[\cdot,\cdot]_{\mrm{A}}$. If $X=\{0\}$, \ie if
\eqref{eq:A3} has the only solutions $f=0$, $\chi=0$,
then $\mathring{\cH}^\bot_s=\cH^\bot_{\mrm{A}}$ implies
$\mathring{\cH}_s=\cH_s=\cH^{\min}_{\mrm{A}}$.
\end{proof}
Assuming the hypotheses in Theorem~\ref{thm:BVSBmin},
an extension
$B_\Theta\in\mrm{Ext}(\Bmin)$ parametrized by
a linear relation $\Theta$ in $\bbC^d$ is defined by
\[
B_\Theta:=\{\whf\in\Bmax\vrt\Gamma^\prime\whf\in\Theta\}\,.
\]
The Krein-Naimark
resolvent formula for $B_\Theta$ is given by
(\cf \cite[Theorem~4.12]{Derkach17})
\[
(B_\Theta-z)^{-1}=(B_0-z)^{-1}+
\gamma_{\Gamma^\prime}(z)(\Theta-M_{\Gamma^\prime}(z))^{-1}
\gamma_{\Gamma^\prime}(\ol{z})^*\,,
\quad
z\in\res B_0\mcap\res B_\Theta
\]
with $\gamma_{\Gamma^\prime}(\ol{z})^*=
\Gamma^\prime_1(B_0-z)^{-1}$. The self-adjoint
extension $B_0:=\ker\Gamma^\prime_0$ corresponds to
the self-adjoint linear relation $\Theta=\{0\}\times\bbC^d$.
The resolvent of $B_0$ is presented below.
\begin{prop}\label{prop:resB0}
Assuming the hypotheses in Theorem~\ref{thm:BVSBmin} we have
\[
(B_0-z)^{-1}(f+k)=(L_m-z)^{-1}f+
h_m((\hat{\Delta}-z)^{-1}\Phi(k))
\]
for $f\in\fH_m$, $k\in\fK_{\mrm{A}}$, and
$z\in\res B_0=\res L\mcap\res\hat{\Delta}$.
\end{prop}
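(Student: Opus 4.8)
The plan is to read off $B_0=\ker\Gamma_0^\prime$ explicitly and then invert $B_0-z$ directly. Since $\Gamma_0^\prime\whf=c$ depends only on the $h_{m+1}$-component of $f$, imposing $\Gamma_0^\prime\whf=0$ in the description of $\Bmax$ from Theorem~\ref{thm:BVSBmin} simply removes that component, while leaving $\Gamma_1^\prime$ — and hence the condition $\braket{\vp,f^\#}=\cG^{\min}_{\mrm{A}}\chi$ present in $\Bmin=\ker\Gamma^\prime$ — unconstrained. Thus
\[
B_0=\{(f^\#+h_m(\chi),\,L_mf^\#+\wtk+k_\bot)\vrt
f^\#\in\fH_{m+2}\,;\,\chi\in\bbC^d\,;\,
k_\bot\in\cH^\bot_{\mrm{A}}\,;\,\wtk\in\fK_{\mrm{A}}\,;\,
d(\wtk)=\fM_d\eta(\chi)\}\,.
\]
Given $w=f+k$ with $f\in\fH_m$, $k\in\fK_{\mrm{A}}$, I then look for $u=u^\#+h_m(\chi)$ together with auxiliary $\wtk,k_\bot$ such that $(u,w+zu)\in B_0$, which amounts to $(L_m-z)u^\#+\wtk+k_\bot-zh_m(\chi)=f+k$.

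Comparing the $\fH_m$-components gives $(L_m-z)u^\#=f$, hence $u^\#=(L_m-z)^{-1}f$ (this is where $z\in\res L$ enters). Comparing the $\fK_{\mrm{A}}$-components and passing to the coordinate functional $d$ gives $d(\wtk)+d(k_\bot)-z\eta(\chi)=d(k)$; inserting $d(\wtk)=\fM_d\eta(\chi)$ yields $d(k_\bot)=d(k)+(z-\fM_d)\eta(\chi)$, which fixes $\wtk$ and $k_\bot$ uniquely in $\fK_{\mrm{A}}$ since $d$ is a bijection. By Lemma~\ref{lem:cHbot}(i) the element $k_\bot$ so produced actually lies in $\cH^\bot_{\mrm{A}}$ if and only if $[\cG_{\mrm{A}}d(k_\bot)]_m=0$; applying $[\cG_{\mrm{A}}\,\cdot\,]_m$ to the last identity and using $[\cG_{\mrm{A}}\eta(\chi)]_m=\cG^{\min}_{\mrm{A}}\chi$, $[\cG_{\mrm{A}}\fM_d\eta(\chi)]_m=[\cG_\fM\eta(\chi)]_m=\Delta\chi$, together with $\Delta=\cG^{\min}_{\mrm{A}}\hat{\Delta}$ (all from Theorem~\ref{thm:BVSBmin}), collapses this to $\cG^{\min}_{\mrm{A}}(\hat{\Delta}-z)\chi=[\cG_{\mrm{A}}d(k)]_m$, that is $(\hat{\Delta}-z)\chi=\Phi(k)$. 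For $z\in\res\hat{\Delta}$ this gives $\chi=(\hat{\Delta}-z)^{-1}\Phi(k)$, and assembling $u=u^\#+h_m(\chi)$ is exactly the asserted formula; it remains only to check that with this $\chi$ the elements $\wtk,k_\bot$ indeed verify their constraints, which is the content of the previous sentence.

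It remains to identify $\res B_0$ with $\res L\mcap\res\hat{\Delta}$. For $\supseteq$: the construction produces a preimage $u$ of every $w\in\cH_{\mrm{A}}$, so $\ran(B_0-z)=\cH_{\mrm{A}}$, and running it with $w=0$ forces $u^\#=0$ and $(\hat{\Delta}-z)\chi=0$, hence $u=0$, so $\ker(B_0-z)=\{0\}$; boundedness of $(B_0-z)^{-1}$ is then immediate from the explicit expression. For $\subseteq$: if $z\in\res B_0$, then feeding $w\in\fH_m$ into $(B_0-z)^{-1}$ and reading off the $\fH_m$-component shows $L_m-z\co\fH_{m+2}\lto\fH_m$ is bijective, so $z\in\res L$; and if $(\hat{\Delta}-z)\chi=0$ for some $\chi\neq0$, then $u:=h_m(\chi)\neq0$ satisfies $(u,zu)\in B_0$ (choose $\wtk$, $k_\bot\in\cH^\bot_{\mrm{A}}$ accordingly), contradicting injectivity of $B_0-z$, so $z\in\res\hat{\Delta}$.

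All the computations are linear-algebraic and routine; the one place that needs attention is the bookkeeping of the multivalued part, namely that in $B_0$ the image carries the free summand $k_\bot\in\cH^\bot_{\mrm{A}}$, and that it is precisely the membership $k_\bot\in\cH^\bot_{\mrm{A}}$ (via Lemma~\ref{lem:cHbot}(i)) — rather than any residual condition coming from $\Gamma_1^\prime$ — that produces the equation $(\hat{\Delta}-z)\chi=\Phi(k)$ for $\chi$. I do not expect any deeper obstacle once the three decompositions — of $u$, of $w$, and of the $\fK_{\mrm{A}}$-coordinates via $d$ — are kept straight.
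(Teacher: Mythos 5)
Your proof is correct and follows essentially the same route as the paper: read off $B_0=\ker\Gamma'_0$ explicitly from the description of $\Bmax$ and invert $B_0-z$ componentwise, with the membership $k_\bot\in\cH^\bot_{\mrm{A}}$ producing the equation $(\hat{\Delta}-z)\chi=\Phi(k)$. The only (cosmetic) difference is that the paper first applies Lemma~\ref{lem:BVSBmin-1} to rewrite $\wtk+k_\bot$ as $h_m(\hat{\Delta}\chi)+k_\bot$ and then reads off $\fN_z(B_0)$ and $\sigma_p(B_0)$, whereas you keep $\wtk$ and $k_\bot$ separate and extract the same condition via Lemma~\ref{lem:cHbot}(i) together with $[\cG_\fM\eta(\chi)]_m=\Delta\chi$.
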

\begin{proof}
By applying Lemma~\ref{lem:BVSBmin-1}
\[
B_0=\{(f^\#+h_m(\chi),L_mf^\#+h_m(\hat{\Delta}\chi)+
k_\bot)\vrt f^\#\in\fH_{m+2}\,;\,
\chi\in\bbC^d\,;\,k_\bot\in\cH^\bot_{\mrm{A}} \}\,.
\]
Thus the eigenspace
\[
\fN_z(B_0)=\fN_z(L_m)\dsum h_m(\fN_z(\hat{\Delta}))\,,
\quad z\in\bbC\,.
\]
From here we see that the point spectrum
\[
\sigma_p(B_0)=\sigma_p(L)\mcup\sigma_p(\hat{\Delta})\,.
\]
Then for $z\notin\sigma_p(B_0)$, the operator
\begin{align*}
(B_0-z)^{-1}=&\{(f+h_m(\chi)+k_\bot,(L_m-z)^{-1}f+
h_m((\hat{\Delta}-z)^{-1}\chi))\vrt
\\
&f\in\ran(L_m-z)\,;\,\chi\in\bbC^d \}
\end{align*}
and it therefore follows that
$\res B_0=\res L\mcap\res\hat{\Delta}$.
Putting $k:=h_m(\chi)+k_\bot$ we have that
$\chi=\Phi(k)$,
and this leads to
the resolvent formula as stated.
\end{proof}
In view of Proposition~\ref{prop:resB0},
the compressed resolvent
$P_{\fH_m}(B_\Theta-z)^{-1}\vrt_{\fH_m}$ is given
for $z\in\res B_0\mcap\res B_\Theta$ by
the right hand side of \eqref{eq:comp}, but where
now $M_\Gamma$ is replaced by $M_{\Gamma^\prime}$.


\end{document}